\theoremstyle{plain}
\newtheorem{theorem}{Theorem}
\newtheorem{lemma}{Lemma}
\newtheorem{proposition}{Proposition}
\newtheorem{corollary}{Corollary}[theorem]
\theoremstyle{definition}
\newtheorem{remark}{Remark}
\newtheorem{definition}{Definition}
\begin{document}

\begin{center}\Large
\textbf{A generalization of Hall's theorem on hypercenter}
\normalsize

\bigskip

Viachaslau I. Murashka and Alexander F. Vasil'ev\footnote{This document is the results of the research project funded by  Belarusian Republican Foundation for Fundamental Research, project No. $\Phi20\text{P-}291$.}

\smallskip

Faculty of Mathematics and Technologies of Programming\\
246019 Francisk Skorina Gomel State University\\
Gomel, Belarus\\

\smallskip

emails: mvimath@yandex.ru and formation56@mail.ru.

\end{center}

\begin{abstract}
   Let $ \sigma$ be a partition of the set of all primes and $\mathfrak{F}$ be a hereditary formation. We described all  formations $\mathfrak{F}$  for which  the $\mathfrak{F}$-hypercenter and the intersection of  weak $K$-$\mathfrak{F}$-subnormalizers of all Sylow subgroups coincide in every group.  In particular the formation of  all $\sigma$-nilpotent groups has this property. With the help of our results we solve a particular case of L.A.~Shemetkov's problem about the intersection of $\mathfrak{F}$-maximal subgroups and the $\mathfrak{F}$-hypercenter.  As corollaries we obtained P.~Hall's and R.~Baer's classical results about the hypercenter. We proved that the non-$\sigma$-nilpotent graph of a group is connected and its diameter is at most~3.

\textbf{Keywords}Finite group;  $\sigma$-nilpotent  group;   hereditary formation;    $K$-$\mathfrak{F}$-subnormal subgroup;    $\mathfrak{F}$-hypercenter;  non-$\mathfrak{F}$-graph of a group.

\textbf{MSC}(2010): Primary 20D25;   Secondary  20F17;     20F19.
\end{abstract}

\section{Introduction}

Throughout this paper, all groups are finite;    $G$ and $p$      always denote a finite group and     a prime respectively.
 The notion of the hypercenter of a group naturally appears with the definition of nilpotency of a group through upper central series.
R. Baer \cite{Baer1959} introduced and studied the analogue of hypercenter for the class of all supersoluble groups.
B. Huppert  \cite{Huppert1969} considered the $ \mathfrak{F}$-hypercenter where $\mathfrak{F}$ is a hereditary saturated formation.
L.A.~Shemetkov \cite{Shemetkov1974} extended the notion of $\mathfrak{F}$-hypercenter for graduated formations.
The $\mathfrak{F}$-hypercenter for formations of algebraic systems (including finite groups) was suggested in \cite{s6}.

Recall that a chief factor $H/K$ of  $G$ is called   $\mathfrak{X}$-\emph{central} (see \cite[p. 127--128]{s6}) in $G$ provided    $(H/K)\rtimes (G/C_G(H/K))\in\mathfrak{X}$. A normal subgroup $N$ of $G$ is said to be $\mathfrak{X}$-\emph{hypercentral} in $G$ if $N=1$ or $N\neq 1$ and every chief factor of $G$ below $N$ is $\mathfrak{X}$-central. The symbol $\mathrm{Z}_\mathfrak{X}(G)$ denotes the $\mathfrak{X}$-\emph{hypercenter} of $G$, that is, the product of all normal $\mathfrak{X}$-hypercentral in $G$ subgroups. According to \cite[Lemma 14.1]{s6} $\mathrm{Z}_\mathfrak{X}(G)$ is the largest normal $\mathfrak{X}$-hypercentral subgroup of $G$. If $\mathfrak{X}=\mathfrak{N}$ is the class of all nilpotent groups, then $\mathrm{Z}_\mathfrak{N}(G)$ is the hypercenter $\mathrm{Z}_\infty(G)$ of $G$.


One of the first characterizations of the hypercenter was obtained by P. Hall \cite{h2}. He proved that the hypercenter of
a group coincides with the intersection of normalizers of all its Sylow subgroups.
    P. Schmid \cite{Schmid2014} proved the analogue of Hall's result in profinite groups.
  There were generalizations of P. Hall's theorem in terms of intersections of normalizers of $\pi_i$-maximal subgroups \cite{Murashka2013} or Hall $\pi_i$-subgroups \cite{hu2019} where $\pi_i$ belongs to some partition $\sigma$ of $\mathbb{P}$ (see Corollaries \ref{corollary2} and \ref{corollary3}).

These results are the part of   research project in which the $\mathfrak{F}$-hypercenter and its generalizations are used as descriptors for characterising some structural
properties of the group. A useful tool that provides a suitable language in this direction is the theory of formations. Nowadays this project is actively developing  by many researchers (for example   \cite{Aivazidis2021,BES2014,hu2019} and \cite[Chapter 1]{Guo2015}).
As part of the above mentioned project, the aim of our paper is to describe all hereditary (not necessary saturated) formations $\mathfrak{F}$ for which the analogue of Hall's result holds for the $\mathfrak{F}$-hypercenter and find the applications of this result.
To formulate our results we need the following definitions.

Let $\mathfrak{F}$ be a formation.  O.H. Kegel \cite{Kegel1978} introduced the formation generalization of subnormality. Recall \cite[Definition 6.1.4]{s9} that a subgroup $H$ of  $G$ is called $K$-$\mathfrak{F}$-\emph{subnormal} in $G$ if there is a chain of subgroups
$ H=H_0\subseteq H_1\subseteq\dots\subseteq H_n=G$
with $H_{i-1}\trianglelefteq H_i$ or $H_{i}/\mathrm{Core}_{H_{i}}(H_{i-1})\in\mathfrak{F}$ for all $i=1,\dots,n$. Denoted by $H\,K$-$\mathfrak{F}$-$sn\,G$. If $\mathfrak{F}=\mathfrak{N}$, then the notions of  $K$-$\mathfrak{F}$-subnormal and subnormal subgroups coincide.

R.W. Carter \cite{Carter1962} and C.J. Graddon \cite{Graddon1971} studied subnormalizers and $\mathfrak{F}$-sub\-nor\-ma\-lizers respectively.    Note that the subnormalizer of a Sylow subgroup always exists and coincides with its normalizer. For an arbitrary subgroup a subnormalizer or $\mathfrak{F}$-subnormalizer    may not exist. A. Mann \cite{h9} suggested the concept of  a weak subnormalizer that  always exists but may be not  unique.  A subgroup  $T$ of $G$ is called \emph{a weak subnormalizer} of $H$ in $G$  if $H$ is subnormal in $T$ and if $H$ is subnormal in $M\leq G$ and $T\leq M$, then $T=M$. Here we introduced its generalization.

  \begin{definition}
    Let $\mathfrak{F}$ be a formation.  We shall call a subgroup  $T$ of $G$   \emph{a weak $K$-$\mathfrak{F}$-subnormalizer} of $H$ in $G$  if $H$ is $K$-$\mathfrak{F}$-subnormal in $T$ and if $H$ is $K$-$\mathfrak{F}$-subnormal in $M\leq G$ and $T\leq M$, then $T=M$.
   \end{definition}

It is clear that a weak $K$-$\mathfrak{F}$-subnormalizer always exists.
Note that the notions of weak subnormalizer and $K$-$\mathfrak{N}$-subnormalizer coincide. See \cite[A, Example 14.12]{s8} for an example of a group that has a subgroup without an unique weak subnormalizer.

Let $\sigma=\{\pi_i\mid i\in I\}$ be a partition of the  set $\mathbb{P}$ of all primes. According to A.N.~Skiba \cite{sp4}, a group $G$ is called $\sigma$-\emph{nilpotent}  if  $G$ has a normal Hall $\pi_i$-subgroup for every $i\in I$ with $\pi(G)\cap\pi_i\neq\emptyset$. The class of all $\sigma$-nilpotent groups is denoted by  $\mathfrak{N}_\sigma$. This class is a very interesting generalization of the class of nilpotent groups and widely studied, applied and are part of the actively developing nowadays $ \sigma$-method, i.e. the studying the properties of a group that depends on the given partition $ \sigma$ (for example, see
\cite{BKPP,CGZ,hu2019,Kazarin2011,Shemetkov1976,sp4}).
 The class $\mathfrak{N}$ of all nilpotent groups coincides with the class $\mathfrak{N}_\sigma$ for $\sigma=\{\{p\}\mid p\in\mathbb{P}\}$.

Recall \cite[Example 2.2.12]{s9} that $\underset{i\in I}\times\mathfrak{F}_{\pi_i}=(G=\underset{i\in I, \pi_i\cap\pi(G)\neq\emptyset}\times \mathrm{O}_{\pi_i}(G)\mid \mathrm{O}_{\pi_i}(G)\in\mathfrak{F}_{\pi_i})$  is a hereditary formation where     $\mathfrak{F}_{\pi_i}$ is a hereditary  formation with $\pi(\mathfrak{F}_{\pi_i})=\pi_i$ for all $i\in I$.
The main result of this paper is

   \begin{theorem}\label{gb}  Let  $\mathfrak{F}$ be a hereditary formation. The following statements are equivalent:

\begin{enumerate}[$(1)$]
 \item The intersection of all weak  $K$-$\mathfrak{F}$-subnormalizers of all cyclic primary subgroups coincides with the  $\mathfrak{F}$-hypercenter in every group.

 \item  The intersection of all weak  $K$-$\mathfrak{F}$-subnormalizers of all Sylow subgroups coincides with the  $\mathfrak{F}$-hypercenter in every group.

 \item There is a partition   $\sigma=\{\pi_i\mid i\in I\}$ of $\mathbb{P}$ such that the $\mathfrak{F}$-hypercenter coincides with the $\sigma$-nilpotent-hypercenter in every group.

 \item There is a partition   $\sigma=\{\pi_i\mid i\in I\}$ of $\mathbb{P}$ such that $\mathfrak{F}=\times_{i\in I}\mathfrak{F}_{\pi_i}$ where $\mathfrak{F}_{\pi_i}$ is a hereditary formation with $\pi(\mathfrak{F}_{\pi_i})=\pi_i$ and $\mathfrak{F}_{\pi_i}$ coincides with the class of all $ \pi_i$-groups for all $ i\in I$ with $|\pi_i|\geq 2$.
     \end{enumerate}   \end{theorem}

\begin{remark}
  As follows from \cite[Theorem]{Yi2015} formations from $(4)$ of Theorem  \ref{gb} are lattice formations.
\end{remark}


\begin{corollary}\label{gs}
  Let $\sigma=\{\pi_i\mid i\in I\}$ be a partition $\mathbb{P}$, $G$ be a group and $\mathcal{M}$ be a set of maximal $\pi_i$-subgroups of $G$, $\pi_i\in \sigma$, such that

\begin{enumerate}[(a)]

  \item if $H\in\mathcal{M}$, then $H^x\in\mathcal{M}$ for every $x\in G$;

  \item for every Sylow subgroup $P$ of $G$ there is $H\in\mathcal{M}$ with $P\leq H$.
\end{enumerate}
  Then the intersection of normalizers in $G$ of all subgroups from $\mathcal{M}$ is $\mathrm{Z}_{\mathfrak{N}_\sigma}(G)$.
\end{corollary}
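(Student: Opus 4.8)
The plan is to set $D=\bigcap_{H\in\mathcal{M}}N_G(H)$ and prove $D=\mathrm{Z}_{\mathfrak{N}_\sigma}(G)$ by two inclusions, using throughout the following description of $\mathfrak{N}_\sigma$-central chief factors. If $U/V$ is a chief factor of $G$ that is a $q$-group with $q\in\pi_k$, then $U/V$ is $\mathfrak{N}_\sigma$-central in $G$ if and only if $G/C_G(U/V)$ is a $\pi_k$-group: indeed $(U/V)\rtimes(G/C_G(U/V))\in\mathfrak{N}_\sigma$ with $U/V$ a normal $\pi_k$-subgroup on which $G/C_G(U/V)$ acts faithfully, so $\sigma$-nilpotency forces the complement to be a $\pi_k$-group, while the converse is clear since then the whole product is a $\pi_k$-group. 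First I would note that by condition (a) the group $G$ permutes $\mathcal{M}$ by conjugation, hence permutes the family $\{N_G(H):H\in\mathcal{M}\}$, so that $D\trianglelefteq G$.

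For the inclusion $\mathrm{Z}_{\mathfrak{N}_\sigma}(G)\le D$ I would use that $\mathfrak{N}_\sigma$ is a saturated formation, so $Z:=\mathrm{Z}_{\mathfrak{N}_\sigma}(G)\in\mathfrak{N}_\sigma$ and $Z=O_{\pi_i}(Z)\times O_{\pi_i'}(Z)$ for each $i$, both factors being characteristic in $Z$ and hence normal in $G$. Fix $H\in\mathcal{M}$, a maximal $\pi_i$-subgroup. Since $O_{\pi_i}(Z)\trianglelefteq G$ is a $\pi_i$-group, $H\,O_{\pi_i}(Z)$ is a $\pi_i$-subgroup containing $H$, so $O_{\pi_i}(Z)\le H$ by maximality. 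On the other hand every $G$-chief factor below $O_{\pi_i'}(Z)$ is a $\pi_j$-group with $j\neq i$ and is $\mathfrak{N}_\sigma$-central, so $H$ (a $\pi_i$-group) maps trivially into the corresponding $\pi_j$-group $G/C_G(\cdot)$ and thus centralizes each such factor; as $H$ stabilizes a $G$-chief series of $O_{\pi_i'}(Z)$ and $|H|$ is coprime to $|O_{\pi_i'}(Z)|$, the stability theorem for coprime action yields $[H,O_{\pi_i'}(Z)]=1$. Hence $Z=O_{\pi_i}(Z)\,O_{\pi_i'}(Z)\le N_G(H)$, and intersecting over $\mathcal{M}$ gives $Z\le D$.

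The substance of the proof is the reverse inclusion $D\le Z$, which I expect to be the main obstacle. Since $D\trianglelefteq G$, by the maximality of the hypercenter (\cite[Lemma 14.1]{s6}) it suffices to show that every chief factor $U/V$ of $G$ with $U\le D$ is $\mathfrak{N}_\sigma$-central; say $U/V$ is a $q$-group with $q\in\pi_k$. By the description above I must prove that $G/C_G(U/V)$ is a $\pi_k$-group, for which it is enough to show that $C_G(U/V)$ contains a full Sylow $r$-subgroup of $G$ for every prime $r\notin\pi_k$. Given such an $r$ dividing $|G|$, pick a Sylow $r$-subgroup $R$ of $G$; by condition (b) there is $H_r\in\mathcal{M}$ with $R\le H_r$, and $H_r$ is a $\pi_{k_r}$-group with $r\in\pi_{k_r}$. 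As $\sigma$ is a partition and $r\notin\pi_k$, we have $\pi_{k_r}\cap\pi_k=\emptyset$, so $q\notin\pi_{k_r}$ and $H_r$ is a $q'$-group. Now $U\le D\le N_G(H_r)$, whence $[U,H_r]\le U\cap H_r$; but $U\cap H_r$ is a $q'$-group while $U/V$ is a $q$-group, so $(U\cap H_r)V/V=1$, i.e. $[U,H_r]\le V$. Thus $H_r$, and in particular $R$, centralizes $U/V$. As $r$ ranges over all primes outside $\pi_k$, the normal subgroup $C_G(U/V)$ contains a full Sylow $r$-subgroup for each such $r$, so $G/C_G(U/V)$ is a $\pi_k$-group and $U/V$ is $\mathfrak{N}_\sigma$-central. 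Hence $D\le Z$, completing the proof. The delicate points are the coprime-stability step in the second paragraph and, above all, the commutator computation $[U,H_r]\le U\cap H_r\le V$ together with the bookkeeping that the covering subgroups $H_r$ are genuinely $q'$-groups; once these are in place the argument goes through directly, although one may alternatively route the equality $D=Z$ through Theorem \ref{gb} by identifying the $N_G(H)$ with weak $K$-$\mathfrak{N}_\sigma$-subnormalizers of the Sylow subgroups they contain.
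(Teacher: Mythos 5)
Your route is genuinely different from the paper's. The paper deduces the corollary from the machinery of Theorem \ref{gb}: it shows $D\le S_{\mathfrak{N}_\sigma}(G)$ (each Sylow subgroup $P$ lies in some $H\in\mathcal{M}$, which is $\sigma$-nilpotent, so $P$ is $K$-$\mathfrak{N}_\sigma$-subnormal in $H\trianglelefteq HD$ and hence in $PD$) and $S_{\mathfrak{N}_\sigma}(G)\le D$ (each $H\in\mathcal{M}$ is the normal Hall $\pi_i$-subgroup of the $\sigma$-nilpotent group $HS_{\mathfrak{N}_\sigma}(G)$), and then quotes $S_{\mathfrak{N}_\sigma}(G)=\mathrm{Z}_{\mathfrak{N}_\sigma}(G)$ from the theorem. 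Your direct argument via centrality of chief factors avoids Theorem \ref{gb} entirely, and your inclusion $\mathrm{Z}_{\mathfrak{N}_\sigma}(G)\le D$ (maximality gives $O_{\pi_i}(Z)\le H$, and coprime stability on a $G$-chief series gives $[H,O_{\pi_i'}(Z)]=1$) is correct.

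There is, however, a genuine gap in the inclusion $D\le\mathrm{Z}_{\mathfrak{N}_\sigma}(G)$: you write ``say $U/V$ is a $q$-group,'' but a chief factor of a non-soluble group need not be a $q$-group --- it can be a direct product of isomorphic non-abelian simple groups, and such factors genuinely occur below $D$ (take $G=A_5$ with $\{2,3,5\}$ inside one block and $\mathcal{M}=\{G\}$, so $D=G$). Your key step ``$U\cap H_r$ is a $q'$-group while $U/V$ is a $q$-group, so $(U\cap H_r)V/V=1$'' uses precisely the assumption that fails there. The argument can be repaired: if $\pi(U/V)$ lies in a single block $\pi_k$, your computation goes through verbatim with ``$\pi_k$-group'' in place of ``$q$-group''; if $\pi(U/V)$ meets two blocks (so $U/V$ is non-abelian), pick $r\in\pi(U/V)$ outside one of them and observe that $[U,H_r]V/V$ is a normal $\pi_{k_r}$-subgroup of the characteristically simple group $U/V$, hence trivial (every nontrivial normal subgroup of $U/V$ has full prime spectrum), so the nontrivial Sylow $r$-subgroup $(R\cap U)V/V$ would lie in $Z(U/V)=1$, a contradiction showing this case cannot occur. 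A similar caveat applies to your opening characterization of $\mathfrak{N}_\sigma$-central factors, which you state only for $q$-groups but actually need for non-abelian factors in the $Z\le D$ direction; it does hold in general, since in a $\sigma$-nilpotent group every element of prime order $p\notin\pi_k$ lies in the Hall $\pi_k'$-subgroup and therefore centralizes the Hall $\pi_k$-subgroup containing $U/V$, contradicting faithfulness. Without this case analysis the proof as written is incomplete.
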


\begin{corollary}[{\cite[Corollary 3.7]{Murashka2013}}]\label{corollary2}
Let $\sigma=\{\pi_i\mid i\in I\}$ be a partition $\mathbb{P}$.  The intersection of normalizers of all $\pi_i$-maximal subgroups of $G$, $\pi_i\in \sigma$, is $\mathrm{Z}_{\mathfrak{N}_\sigma}(G)$.
\end{corollary}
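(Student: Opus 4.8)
The plan is to prove the two inclusions $\mathrm{Z}_{\mathfrak{N}_\sigma}(G)\subseteq D$ and $D\subseteq\mathrm{Z}_{\mathfrak{N}_\sigma}(G)$ directly, where $D:=\bigcap_{H\in\mathcal{M}}N_G(H)$ and $Z:=\mathrm{Z}_{\mathfrak{N}_\sigma}(G)$; Theorem~\ref{gb} enters only to record that, since $\mathfrak{N}_\sigma$ is a hereditary formation, the common value $Z$ is simultaneously the intersection of all weak $K$-$\mathfrak{N}_\sigma$-subnormalizers of the Sylow subgroups, which is what makes the statement a genuine corollary. Two observations run throughout: first, $D\trianglelefteq G$, because $N_G(H^x)=N_G(H)^x$ and, by~(a), $x$ permutes $\mathcal{M}$, so $D^x=D$ for every $x\in G$; second, each $H\in\mathcal{M}$, being a $\pi_i$-group, lies in $\mathfrak{N}_\sigma$.

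For $Z\subseteq D$ I would first recall the $\sigma$-analogue of the fact that the hypercenter is nilpotent, namely that $Z$ is itself $\sigma$-nilpotent (every $G$-chief factor of $Z$ is $\mathfrak{N}_\sigma$-central, hence lies in the hereditary formation $\mathfrak{N}_\sigma$ and is $\sigma$-central for $Z$). Writing $Z=Z_{\pi_i}\times Z_{\pi_i'}$ for its normal Hall $\pi_i$- and $\pi_i'$-subgroups (both characteristic in $Z$, hence normal in $G$), fix $H\in\mathcal{M}$ of class $\pi_i$. Then $HZ_{\pi_i}$ is a $\pi_i$-subgroup containing the maximal $\pi_i$-subgroup $H$, so $Z_{\pi_i}\subseteq H$; and $H$ acts coprimely on $Z_{\pi_i'}$ and centralizes every $G$-chief factor $W/X$ of $Z_{\pi_i'}$ (a $\pi_m$-group, $m\neq i$, on which $G$, and so the $\pi_i$-group $H$, induces only $\pi_m$-automorphisms), whence $[H,Z_{\pi_i'}]=1$ by coprime stability. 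Therefore $Z=Z_{\pi_i}Z_{\pi_i'}\subseteq N_G(H)$, and intersecting over $\mathcal{M}$ gives $Z\subseteq D$.

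For $D\subseteq Z$ I would show that the normal subgroup $D$ is $\mathfrak{N}_\sigma$-hypercentral, i.e. that every $G$-chief factor $U/V$ with $U\leq D$ is $\mathfrak{N}_\sigma$-central. The single tool is the commutator estimate $[U,H]\subseteq U\cap H$ valid for each $H\in\mathcal{M}$: indeed $U\trianglelefteq G$ gives $[U,H]\subseteq U$, while $U\subseteq D\subseteq N_G(H)$ gives $[U,H]\subseteq H$. Suppose first that $U/V$ is not $\sigma$-primary; then it is non-abelian, and for any prime $r\in\pi_k$ dividing $|U/V|$ it fails to be a $\pi_k$-group. Choosing by~(b) an $H\in\mathcal{M}$ (necessarily of class $\pi_k$) containing a Sylow $r$-subgroup, the estimate shows that $[U/V,H]$ is a $\pi_k$-group; being a normal subgroup of $U/V$, and $U/V$ admitting no nontrivial $\pi_k$-group as a normal subgroup, it is trivial, so the Sylow $r$-subgroup lies in $C_G(U/V)$. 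As this holds for every $r\mid|U/V|$, the index $|G:C_G(U/V)|$ becomes coprime to $|U/V|$, contradicting $U/V\cong\mathrm{Inn}(U/V)\hookrightarrow G/C_G(U/V)$, which forces $1\neq|U/V|\mid|G:C_G(U/V)|$. Hence $U/V$ is a $\pi_k$-group for a single $k$. Now if some $r\in\pi_l$ with $l\neq k$ divided $|G:C_G(U/V)|$, then taking $H\in\mathcal{M}$ of class $\pi_l$ containing a Sylow $r$-subgroup (again by~(b)), the estimate would place $[U/V,H]$ inside a $\pi_l$-subgroup of the $\pi_k$-group $U/V$, hence $[U/V,H]=1$ and $H\subseteq C_G(U/V)$, contradicting $r\mid|G:C_G(U/V)|$. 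Thus $G/C_G(U/V)$ is a $\pi_k$-group, $U/V$ is $\mathfrak{N}_\sigma$-central, and $D\subseteq Z$.

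I expect the main obstacle to be the $\sigma$-primary reduction inside the $D\subseteq Z$ direction: one must exclude $G$-chief factors below $D$ whose order spreads over several $\sigma$-classes, and it is precisely the pairing of the elementary commutator estimate $[U,H]\subseteq U\cap H$ with the embedding $U/V\cong\mathrm{Inn}(U/V)\hookrightarrow G/C_G(U/V)$ for non-abelian factors that settles this. Once each chief factor is known to be $\sigma$-primary, centrality is immediate from the same estimate. Combining the two inclusions yields $D=\mathrm{Z}_{\mathfrak{N}_\sigma}(G)$, and Theorem~\ref{gb} identifies this common value with the intersection of the weak $K$-$\mathfrak{N}_\sigma$-subnormalizers of the Sylow subgroups of $G$.
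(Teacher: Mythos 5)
Your proposal is correct, but it takes a genuinely different route from the paper's. The paper obtains this statement as the special case $\mathcal{M}=\{\text{all } \pi_i\text{-maximal subgroups}\}$ of Corollary \ref{gs}, whose proof leans entirely on the machinery of Theorem \ref{gb}: one shows $D\leq S_{\mathfrak{N}_\sigma}(G)$ via $K$-$\mathfrak{N}_\sigma$-subnormality and Proposition \ref{p1}, invokes the identities $S_{\mathfrak{N}_\sigma}(G)=\mathrm{Int}_{\mathfrak{N}_\sigma}(G)=\mathrm{Z}_{\mathfrak{N}_\sigma}(G)$ established in the proof of Theorem \ref{gb} (resting ultimately on Skiba's theorem on $\mathrm{Int}_\mathfrak{F}$ and on Propositions \ref{p1} and \ref{p2}), and then gets $S_{\mathfrak{N}_\sigma}(G)\leq D$ because $HS_{\mathfrak{N}_\sigma}(G)\in\mathfrak{N}_\sigma$ forces $H$ to be its normal Hall $\pi_i$-subgroup. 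You instead argue directly, in the spirit of Hall's original proof: $\mathrm{Z}_{\mathfrak{N}_\sigma}(G)\leq D$ via the Hall decomposition of the ($\sigma$-nilpotent) hypercenter plus coprime stability of a centralized series, and $D\leq \mathrm{Z}_{\mathfrak{N}_\sigma}(G)$ via the estimate $[U,H]\leq U\cap H$ applied to chief factors, with the embedding $U/V\hookrightarrow G/C_G(U/V)$ for non-abelian factors eliminating the non-$\sigma$-primary case; in fact your argument proves the more general Corollary \ref{gs}, not just this special case. Your route buys complete independence from the weak-subnormalizer and $\mathrm{Int}_\mathfrak{F}$ apparatus (Theorem \ref{gb} is not needed for this corollary at all, and your closing appeal to it is decorative), at the cost of redoing by hand what the paper gets for free from its framework. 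Two points deserve to be made explicit rather than parenthetical: the claim that $\mathrm{Z}_{\mathfrak{N}_\sigma}(G)$ is itself $\sigma$-nilpotent is not a formal consequence of each chief factor lying in $\mathfrak{N}_\sigma$ — it uses that $\mathfrak{N}_\sigma$ is a saturated (hence $Z$-saturated) formation together with Proposition \ref{l5}; and the coprime stability step needs Feit--Thompson (one of $H$, $Z_{\pi_i'}$ has odd order) to run the standard $[N,A,A]=[N,A]$ argument. Both facts are true and standard, so these are presentational rather than mathematical gaps.
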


\begin{corollary}[{\cite[Theorem B(ii)]{hu2019}}]\label{corollary3}
Let $\sigma=\{\pi_i\mid i\in I\}$ be a partition $\mathbb{P}$.  Assume that a group $G$ posses a set $\mathcal{H}$ of Hall subgroups such that $\mathcal{H}$ contains exactly one Hall $\pi_i$-subgroup of $G$ with $\pi_i\cap\pi(G)\neq\emptyset$. Then
  $$\bigcap_{x\in G}\bigcap_{H\in\mathcal{H}} N_G(H^x)=\mathrm{Z}_{\mathfrak{N}_\sigma}(G). $$
\end{corollary}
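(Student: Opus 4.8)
The plan is to prove the set equality $D:=\bigcap_{H\in\mathcal M}N_G(H)=\mathrm{Z}_{\mathfrak N_\sigma}(G)=:Z$ by establishing the two inclusions separately; Theorem~\ref{gb} guarantees that $\mathfrak N_\sigma$ is the right class for such a description, but the corollary itself I would obtain by a direct computation with normalizers. First I would record two preliminary facts. Since $\mathfrak N_\sigma$ is a saturated formation, $Z\in\mathfrak N_\sigma$, so $Z$ is the direct product $Z=\prod_i Z_{\pi_i}$ of its Hall $\pi_i$-subgroups, and each $Z_{\pi_i}$, being characteristic in the normal subgroup $Z$, is normal in $G$. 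Moreover, by hypothesis (a) the family $\{N_G(H):H\in\mathcal M\}$ is permuted by conjugation (as $N_G(H)^x=N_G(H^x)$ and $H^x\in\mathcal M$), so its intersection $D$ is normal in $G$.

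For the inclusion $Z\le D$, I would fix $H\in\mathcal M$, say a maximal $\pi_j$-subgroup, and show $Z\le N_G(H)$ factor by factor. For the $\pi_j$-part, $HZ_{\pi_j}$ is a $\pi_j$-subgroup containing $H$, so maximality forces $Z_{\pi_j}\le H\le N_G(H)$. For $i\ne j$ the subgroup $Z_{\pi_i}$ is a normal $\pi_i$-subgroup on whose $G$-chief factors $G$ acts as a $\pi_i$-group (these factors are $\mathfrak N_\sigma$-central); since $\pi_i\cap\pi_j=\emptyset$, the $\pi_j$-group $H$ must act trivially on each of them, and a coprime action stabilizing a chief series is trivial, whence $[H,Z_{\pi_i}]=1$ and $Z_{\pi_i}\le C_G(H)\le N_G(H)$. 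Multiplying over $i$ gives $Z\le N_G(H)$, and as $H$ was arbitrary, $Z\le D$.

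For the reverse inclusion I would show that $D$ is $\mathfrak N_\sigma$-hypercentral, i.e. that every chief factor $L/K$ of $G$ with $L\le D$ is $\mathfrak N_\sigma$-central; since $D\trianglelefteq G$ and $Z$ is the largest normal $\mathfrak N_\sigma$-hypercentral subgroup, this yields $D\le Z$. Passing to $\bar G=G/K$, write $\bar L=L/K$. For any prime $q\in\pi_j$ with $\pi_j\cap\pi(\bar L)=\emptyset$, hypothesis (b) provides $H\in\mathcal M$, a maximal $\pi_j$-subgroup containing a Sylow $q$-subgroup of $G$; then $\bar L\le N_{\bar G}(\bar H)$, and since $\bar L\trianglelefteq\bar G$ one gets $[\bar L,\bar H]\le\bar L\cap\bar H=:B$. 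Because $\bar L$ normalizes $\bar H$, the group $B$ is a normal $\pi_j$-subgroup of the characteristically simple group $\bar L$, hence $B=1$; thus $\bar H$, and in particular the chosen Sylow $q$-subgroup, centralizes $\bar L$. It follows that $G/C_G(L/K)$ is a $\pi^\ast$-group, where $\pi^\ast$ is the union of those $\pi_j$ meeting $\pi(L/K)$.

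The main obstacle is to upgrade this to genuine $\mathfrak N_\sigma$-centrality, which requires $\pi(L/K)$ to lie inside a single member $\pi_i$ of the partition. I would rule out the mixed case by the same commutator device: if $q\in\pi_j\cap\pi(L/K)$ with $\pi_j$ different from the part containing some other prime of $\pi(L/K)$, then for a nonabelian $L/K\cong S^n$ the subgroup $B=\bar L\cap\bar H$ is again a normal $\pi_j$-subgroup of $S^n$ and so trivial, which forces the Sylow $q$-subgroup of $\bar L$ into $Z(\bar L)=1$, impossible since $q\in\pi(S)$; while an abelian $L/K$ is automatically a single prime. Hence $\pi(L/K)\subseteq\pi_i$ for one $i$, $G/C_G(L/K)$ is a $\pi_i$-group, and $(L/K)\rtimes(G/C_G(L/K))$ is $\sigma$-nilpotent. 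This gives $D\le Z$ and completes the proof; Corollaries~\ref{corollary2} and~\ref{corollary3} then follow by taking $\mathcal M$ to be the set of all $\pi_i$-maximal subgroups, respectively a conjugacy-closed system of Hall subgroups.
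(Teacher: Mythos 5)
Your proof is correct, but it takes a genuinely different route from the paper. The paper never argues with normalizers directly: it deduces Corollary \ref{corollary3} from Corollary \ref{gs}, whose proof runs entirely through the $K$-$\mathfrak{N}_\sigma$-subnormalizer machinery --- Proposition \ref{p1} identifies the intersection $S_{\mathfrak{N}_\sigma}(G)$ of weak $K$-$\mathfrak{N}_\sigma$-subnormalizers of Sylow subgroups, Lemmas \ref{l3.1} and \ref{l3.2} show the normalizer intersection $D$ satisfies $D\leq S_{\mathfrak{N}_\sigma}(G)$, and the chain of equalities $S_{\mathfrak{N}_\sigma}(G)=\mathrm{Z}_{\mathfrak{N}_\sigma}(G)=\mathrm{Int}_{\mathfrak{N}_\sigma}(G)$ extracted from the proof of Theorem \ref{gb} (which itself rests on Skiba's theorem $\mathrm{Int}_\mathfrak{F}(G)=\mathrm{Z}_\mathfrak{F}(G)$ from \cite{h4} and the lattice/$v^*\mathfrak{F}$ results of \cite{vF}) then gives the reverse inclusion via $\mathfrak{N}_\sigma$-maximality of the subgroups in $\mathcal{M}$. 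You instead reprove the $\mathcal{M}$-version from scratch: for $\mathrm{Z}_{\mathfrak{N}_\sigma}(G)\leq D$ you use the direct-product decomposition of the hypercenter (which, as you note, needs saturation of $\mathfrak{N}_\sigma$ together with $\mathrm{Z}_\mathfrak{F}(\mathrm{Z}_\mathfrak{F}(G))=\mathrm{Z}_\mathfrak{F}(G)$, i.e.\ Proposition \ref{l5}), maximality for the $\pi_j$-part, and coprime stability on a chief series for the other parts --- the latter tacitly invokes Feit--Thompson, since coprimality only guarantees one of the two groups is soluble; for $D\leq \mathrm{Z}_{\mathfrak{N}_\sigma}(G)$ your commutator argument $[\bar L,\bar H]\leq \bar L\cap\bar H$ combined with the structure of characteristically simple groups (normal subgroups of $S^n$ are subproducts, $Z(\bar L)=1$, and all Sylow $q$-subgroups of $\bar G$ lie in the normal subgroup $C_{\bar G}(\bar L)$ once one does) correctly handles both the coprime primes and the mixed nonabelian case. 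Each step checks out, and the specializations to Corollaries \ref{corollary2} and \ref{corollary3} are immediate (a Hall $\pi_i$-subgroup is $\pi_i$-maximal, and a Sylow subgroup of it is Sylow in $G$, giving condition (b) after closing under conjugation). What your approach buys is a self-contained, elementary proof of this corollary that bypasses Theorem \ref{gb}, the formations $\overline{w}\mathfrak{F}$, $v^*\mathfrak{F}$ and Skiba's hypercenter theorem entirely, and it makes visible exactly where hypotheses (a) and (b) are used (only for $D\trianglelefteq G$ and only for $D\leq \mathrm{Z}$, respectively); what the paper's route buys is brevity given its machinery and, more importantly, the full equivalence of Theorem \ref{gb}, of which these corollaries are instances --- your argument recovers only the one direction needed here.
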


\begin{corollary}[P. Hall \cite{h2}] The intersection of all normalizers of Sylow subgroups is the hypercenter in every group.
\end{corollary}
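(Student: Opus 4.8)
The plan is to obtain Hall's theorem as the instance of the preceding corollaries corresponding to the finest possible partition of $\mathbb{P}$. Concretely, I would set $\sigma=\{\{p\}\mid p\in\mathbb{P}\}$. As recorded in the discussion following the statement of Corollary \ref{corollary3}, for this choice one has $\mathfrak{N}_\sigma=\mathfrak{N}$, the class of all nilpotent groups, and consequently $\mathrm{Z}_{\mathfrak{N}_\sigma}(G)=\mathrm{Z}_{\mathfrak{N}}(G)$ is exactly the hypercenter of $G$ (the latter identification being the one noted in the introduction). Thus the only remaining task is to match the intersection of normalizers of all Sylow subgroups with the left-hand side of one of the corollaries under this specialization.

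For $\sigma=\{\{p\}\mid p\in\mathbb{P}\}$ a Hall $\pi_i$-subgroup is precisely a Sylow $p$-subgroup, and a $\pi_i$-maximal subgroup is likewise a maximal $p$-subgroup, i.e. a Sylow $p$-subgroup. I would therefore invoke Corollary \ref{corollary3}: choose the set $\mathcal{H}$ to contain, for each prime $p$ dividing $|G|$, a single Sylow $p$-subgroup of $G$. Since by Sylow's theorem all Sylow $p$-subgroups are conjugate, as $x$ ranges over $G$ and $H$ over $\mathcal{H}$ the conjugate $H^x$ runs through every Sylow subgroup of $G$. Hence the double intersection $\bigcap_{x\in G}\bigcap_{H\in\mathcal{H}}N_G(H^x)$ collapses to the intersection of the normalizers of all Sylow subgroups of $G$, and Corollary \ref{corollary3} identifies it with $\mathrm{Z}_{\mathfrak{N}_\sigma}(G)=\mathrm{Z}_{\mathfrak{N}}(G)$, the hypercenter.

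I do not anticipate any genuine obstacle here, since all substantive content is already carried by the corollaries; the argument is purely a matter of unwinding definitions for the finest partition. As an alternative route I could instead appeal to Corollary \ref{gs}, taking $\sigma=\{\{p\}\mid p\in\mathbb{P}\}$ and letting $\mathcal{M}$ be the set of \emph{all} Sylow subgroups of $G$: condition (a) holds because a conjugate of a Sylow subgroup is again a Sylow subgroup, and condition (b) holds trivially since each Sylow subgroup $P$ lies in itself. Corollary \ref{gs} then gives directly that the intersection of the normalizers of all members of $\mathcal{M}$ equals $\mathrm{Z}_{\mathfrak{N}_\sigma}(G)$, which is the hypercenter. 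Either path completes the proof.
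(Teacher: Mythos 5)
Your proposal is correct and matches the paper's intended derivation: the corollary is stated without explicit proof precisely because it is the specialization of the preceding results to the finest partition $\sigma=\{\{p\}\mid p\in\mathbb{P}\}$, for which $\mathfrak{N}_\sigma=\mathfrak{N}$ and Hall $\pi_i$-subgroups are Sylow subgroups. Both of your routes (via Corollary \ref{corollary3} or via Corollary \ref{gs}) are valid instances of this same unwinding.
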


\begin{corollary} The intersection of all weak subnormalizers of cyclic primary subgroups is the hypercenter in every group.
\end{corollary}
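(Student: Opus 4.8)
The plan is to obtain this as a direct specialization of Theorem \ref{gb} to the formation $\mathfrak{F}=\mathfrak{N}$ of all nilpotent groups. First I would record that $\mathfrak{N}$ is a hereditary formation, so the hypotheses of Theorem \ref{gb} are met. Next, taking the finest partition $\sigma=\{\{p\}\mid p\in\mathbb{P}\}$, the introduction already notes that $\mathfrak{N}=\mathfrak{N}_\sigma$ for this $\sigma$; hence statement $(3)$ of Theorem \ref{gb} holds for $\mathfrak{F}=\mathfrak{N}$. Invoking the implication $(3)\Rightarrow(1)$, I conclude that the intersection of all weak $K$-$\mathfrak{N}$-subnormalizers of all cyclic primary subgroups of $G$ equals the $\mathfrak{N}$-hypercenter $\mathrm{Z}_{\mathfrak{N}}(G)$.

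The remaining work is purely a translation of terminology back to the classical objects named in the statement. As recalled after the definition of $K$-$\mathfrak{F}$-subnormality, when $\mathfrak{F}=\mathfrak{N}$ the notions of $K$-$\mathfrak{N}$-subnormal subgroup and subnormal subgroup coincide. Substituting ``subnormal'' for ``$K$-$\mathfrak{N}$-subnormal'' throughout the definition of a weak $K$-$\mathfrak{N}$-subnormalizer, one sees that it reduces verbatim to A. Mann's definition of a weak subnormalizer: $T$ is a weak subnormalizer of $H$ in $G$ precisely when $T$ is a weak $K$-$\mathfrak{N}$-subnormalizer of $H$ in $G$. Likewise, by the remark that $\mathrm{Z}_{\mathfrak{N}}(G)$ is the hypercenter of $G$, the right-hand side of the identity above is exactly the hypercenter.

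Combining these two identifications with the equality furnished by Theorem \ref{gb} yields that the intersection of all weak subnormalizers of cyclic primary subgroups of $G$ is the hypercenter of $G$, as required. I do not anticipate a genuine obstacle here, since the corollary is a specialization rather than a new argument; the only point demanding care is the verification that the terminological reductions (subnormal $=$ $K$-$\mathfrak{N}$-subnormal, weak subnormalizer $=$ weak $K$-$\mathfrak{N}$-subnormalizer, and $\mathrm{Z}_{\mathfrak{N}}(G)=$ hypercenter) are exact, all of which are supplied by the conventions fixed in the introduction.
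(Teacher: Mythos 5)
Your proposal is correct and is exactly the argument the paper intends: specialize Theorem \ref{gb} to $\mathfrak{F}=\mathfrak{N}=\mathfrak{N}_\sigma$ with $\sigma=\{\{p\}\mid p\in\mathbb{P}\}$, apply $(3)\Rightarrow(1)$, and translate via the stated identifications ($K$-$\mathfrak{N}$-subnormal $=$ subnormal, hence weak $K$-$\mathfrak{N}$-subnormalizer $=$ weak subnormalizer, and $\mathrm{Z}_{\mathfrak{N}}(G)$ $=$ hypercenter). The paper leaves this routine specialization implicit, and your write-up supplies it faithfully.
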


\begin{corollary}[{\cite[Theorem 3.1(2)]{Murashka2013}}]\label{cor15} Let $\sigma=\{\pi_i\mid i\in I\}$ be a partition $\mathbb{P}$. A $ \pi_i$-element belongs to $\mathrm{Z}_{\mathfrak{N}_\sigma}(G)$ iff its permutes with all $ \pi_i'$-elements of a group $G$.
\end{corollary}

\begin{corollary}[R. Baer {\cite[5, Theorem 1(ii)]{Baer1953}}] Let $ p$ be a prime. A $p$-element belongs to $\mathrm{Z}_\infty(G)$ iff its permutes with all $p'$-elements of a group $G$.
\end{corollary}

\section{Preliminaries}

The notation and terminology agree with \cite{s9} and \cite{s8}. We refer the reader to these
books for the results about formations.

Recall that a \emph{formation} is a class of groups which is closed under taking epimorphic images and subdirect products. A formation $\mathfrak{F}$ is called \emph{hereditary} if $H\in \mathfrak{F}$ whenever $H\leq G\in \mathfrak{F}$; \emph{saturated} if $G\in\mathfrak{F}$
whenever $G/\Phi(N)\in\mathfrak{F}$ for some normal  subgroup $N$ of $G$.

\begin{lemma}[{\cite[Lemma 2.5]{sp4}}]
  The class of all $\sigma$-nilpotent groups is a hereditary saturated formation.
\end{lemma}

The following two lemmas follow from \cite[Lemmas 6.1.6 and 6.1.7]{s9}.

\begin{lemma}\label{l3.1}  Let $\mathfrak{F}$ be a formation, $H$ and  $R$ be subgroups of   $G$ and $N\trianglelefteq G$.
\begin{enumerate}[$(1)$]
     \item If $H$ $K$-$\mathfrak{F}$-$sn\,G$, then $HN/N$ $K$-$\mathfrak{F}$-$sn\,G/N$.

\item If  $H/N $~$K$-$\mathfrak{F}$-$sn\,G/N$, then   $H$~$K$-$\mathfrak{F}$-$sn\,G$.

\item   If $H $ $K$-$\mathfrak{F}$-$sn\,R$  and  $R $~$K$-$\mathfrak{F}$-$sn\,G$, then $H$ $K$-$\mathfrak{F}$-$sn\,G$.
\end{enumerate}
\end{lemma}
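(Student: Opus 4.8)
The plan is to verify the three assertions directly from the definition of $K$-$\mathfrak{F}$-subnormality, exploiting only that $\mathfrak{F}$ is closed under taking epimorphic images. Statement $(3)$ is the easiest: given a defining chain $H=H_0\subseteq\dots\subseteq H_m=R$ witnessing $H$ $K$-$\mathfrak{F}$-$sn\,R$ and a chain $R=R_0\subseteq\dots\subseteq R_n=G$ witnessing $R$ $K$-$\mathfrak{F}$-$sn\,G$, I would simply concatenate them into a single chain from $H$ to $G$. Each link in the concatenated chain is a link of one of the two original chains, so it already satisfies the required alternative (normal inclusion, or $\mathfrak{F}$-quotient of the core), and transitivity follows with no computation.

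For statement $(1)$ I would push a defining chain $H=H_0\subseteq\dots\subseteq H_n=G$ through the natural epimorphism $G\to G/N$, obtaining the chain $HN/N=H_0N/N\subseteq\dots\subseteq H_nN/N=G/N$. Normal links are clearly preserved under images. The only point requiring care concerns links of the second type, and this is where the main obstacle lies: cores do not commute with epimorphisms, and in general the image of a core is only contained in the core of the images. Concretely, writing bars for images modulo $N$, one has $\overline{\mathrm{Core}_{H_i}(H_{i-1})}\subseteq\mathrm{Core}_{\overline{H_i}}(\overline{H_{i-1}})$. Fortunately this containment points in the favourable direction: setting $C=\mathrm{Core}_{H_i}(H_{i-1})$, an application of the isomorphism theorem gives $H_iN/CN\cong H_i/(H_i\cap CN)$, which is an epimorphic image of $H_i/C\in\mathfrak{F}$ and hence lies in $\mathfrak{F}$; and $\overline{H_i}/\mathrm{Core}_{\overline{H_i}}(\overline{H_{i-1}})$ is in turn an epimorphic image of $\overline{H_i}/\overline{C}\cong H_iN/CN$. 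Since $\mathfrak{F}$ is a formation, the quotient by the larger core is again in $\mathfrak{F}$, and the second-type link is preserved.

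For statement $(2)$ I would instead use the correspondence theorem, assuming as usual that $N\leq H$. A defining chain for $H/N$ in $G/N$ lifts uniquely to a chain $H=H_0\subseteq\dots\subseteq H_n=G$ of subgroups containing $N$, and normal links correspond to normal links. For second-type links the situation is cleaner than in $(1)$ because $N$ is now contained in every term: since $N\trianglelefteq H_i$ and $N\leq H_{i-1}$, we get $N\leq\mathrm{Core}_{H_i}(H_{i-1})$, so the core genuinely commutes with the quotient, $\mathrm{Core}_{H_i/N}(H_{i-1}/N)=\mathrm{Core}_{H_i}(H_{i-1})/N$. Hence $(H_i/N)/\mathrm{Core}_{H_i/N}(H_{i-1}/N)\cong H_i/\mathrm{Core}_{H_i}(H_{i-1})$, so membership in $\mathfrak{F}$ transfers from the quotient group back to $G$ itself, completing the lifted chain and the proof.
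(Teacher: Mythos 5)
Your proof is correct, but note that the paper contains no proof of this lemma at all: it is dispatched with the single remark that it ``follows from \cite[Lemmas 6.1.6 and 6.1.7]{s9}''. What you have written is, in effect, the standard textbook argument underlying that citation, carried out directly from the definition. All three parts check out. In $(3)$ concatenation is indeed all that is needed. In $(1)$ you correctly identify the one delicate point: with $C=\mathrm{Core}_{H_i}(H_{i-1})$ one only has $CN/N\leq\mathrm{Core}_{H_iN/N}(H_{i-1}N/N)$, but this is the favourable direction, since $H_iN/CN\simeq H_i/(H_i\cap CN)$ is an epimorphic image of $H_i/C\in\mathfrak{F}$ and the quotient by the possibly larger core is in turn an image of $H_iN/CN$; quotient-closure of $\mathfrak{F}$ finishes the link. (Implicit here are the facts that $H_i$ normalizes $CN$, so the second isomorphism theorem applies, and that images of normal links under $G\to G/N$ remain normal links --- both routine, and your phrasing covers them.) In $(2)$ the correspondence theorem together with $N\leq\mathrm{Core}_{H_i}(H_{i-1})$, which holds because $N\trianglelefteq H_i$ and $N\leq H_{i-1}$, gives $\mathrm{Core}_{H_i/N}(H_{i-1}/N)=\mathrm{Core}_{H_i}(H_{i-1})/N$, so the $\mathfrak{F}$-condition lifts verbatim; your standing assumption $N\leq H$ is exactly what the statement presupposes by writing $H/N$. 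So your proposal supplies a complete, self-contained proof where the paper merely points to the literature, at the modest cost of a page of isomorphism-theorem bookkeeping that the citation avoids.
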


\begin{lemma}\label{l3.2}
 Let $\mathfrak{F}$ be a hereditary formation, $H$ and  $R$ be subgroups of  $G$.
\begin{enumerate}[$(1)$]

\item  If   $H$~$K$-$\mathfrak{F}$-$sn\,G$, then  $H\cap R $~$K$-$\mathfrak{F}$-$sn\,R$.

\item If  $H $\,$K$-$\mathfrak{F}$-$sn\,G$ and $R $\,$K$-$\mathfrak{F}$-$sn\,G$, then   $H\cap R $\,$K$-$\mathfrak{F}$-$sn\,G$.
\end{enumerate}
\end{lemma}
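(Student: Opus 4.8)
The plan is to reduce statement (2) to statement (1) together with the transitivity property recorded in Lemma~\ref{l3.1}(3), and to prove (1) by intersecting a defining subnormal chain with $R$ and checking that each step survives. So the whole burden of the lemma sits in part (1), and within part (1) in the step governed by the $\mathfrak{F}$-condition; the hereditary hypothesis on $\mathfrak{F}$ will be used there and nowhere else.

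For (1), I would start from a chain $H=H_0\subseteq H_1\subseteq\dots\subseteq H_n=G$ witnessing that $H$ is $K$-$\mathfrak{F}$-subnormal in $G$, so that for each $i$ either $H_{i-1}\trianglelefteq H_i$ or $H_i/\mathrm{Core}_{H_i}(H_{i-1})\in\mathfrak{F}$. Intersecting throughout with $R$ produces
$$H\cap R=H_0\cap R\subseteq H_1\cap R\subseteq\dots\subseteq H_n\cap R=R,$$
and the goal is to verify that each step of this new chain still satisfies one of the two alternatives. In the normal case $H_{i-1}\trianglelefteq H_i$, any $x\in H_i\cap R$ normalizes $H_{i-1}$ (as $x\in H_i$) and normalizes $R$ (as $x\in R$), so $H_{i-1}\cap R\trianglelefteq H_i\cap R$; this is the routine case.

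The substantive case is $H_i/\mathrm{Core}_{H_i}(H_{i-1})\in\mathfrak{F}$. Writing $C=\mathrm{Core}_{H_i}(H_{i-1})$, $D=H_i\cap R$ and $E=H_{i-1}\cap R=H_{i-1}\cap D$, I would record two observations. First, $C\cap D\trianglelefteq D$ and $C\cap D\subseteq C\subseteq H_{i-1}$, so $C\cap D\subseteq E$ and hence $C\cap D\subseteq\mathrm{Core}_D(E)$. Second, the second isomorphism theorem gives $D/(C\cap D)\cong DC/C\leq H_i/C\in\mathfrak{F}$, and because $\mathfrak{F}$ is hereditary it follows that $D/(C\cap D)\in\mathfrak{F}$. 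Since $\mathrm{Core}_D(E)\supseteq C\cap D$, the quotient $D/\mathrm{Core}_D(E)$ is an epimorphic image of $D/(C\cap D)$, and closure of the formation $\mathfrak{F}$ under epimorphic images yields $(H_i\cap R)/\mathrm{Core}_{H_i\cap R}(H_{i-1}\cap R)\in\mathfrak{F}$. Thus the intersected chain witnesses that $H\cap R$ is $K$-$\mathfrak{F}$-subnormal in $R$.

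For (2), I would simply apply (1) to conclude that $H\cap R$ is $K$-$\mathfrak{F}$-subnormal in $R$; since $R$ is $K$-$\mathfrak{F}$-subnormal in $G$ by hypothesis, Lemma~\ref{l3.1}(3) immediately delivers that $H\cap R$ is $K$-$\mathfrak{F}$-subnormal in $G$. The only delicate point in the whole argument is the $\mathfrak{F}$-step above, and I expect the main obstacle there to be the careful bookkeeping that $C\cap D$ really lands inside $\mathrm{Core}_D(E)$ and, crucially, that heredity of $\mathfrak{F}$ is exactly what lets one transport $DC/C\leq H_i/C$ back into $\mathfrak{F}$ — without that hypothesis the intersected chain need not remain an $\mathfrak{F}$-subnormal chain.
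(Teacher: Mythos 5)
Your proof is correct: intersecting the witnessing chain with $R$, settling the normal steps via $(H_{i-1}\cap R)^x=H_{i-1}^x\cap R^x=H_{i-1}\cap R$ for $x\in H_i\cap R$, and handling the $\mathfrak{F}$-steps by noting $C\cap D\leq\mathrm{Core}_D(E)$ and $D/(C\cap D)\simeq DC/C\leq H_i/C\in\mathfrak{F}$ (heredity) followed by quotient-closure is exactly the standard argument, as is the reduction of part (2) to part (1) combined with transitivity from Lemma \ref{l3.1}(3). The paper gives no proof of its own --- it cites \cite[Lemmas 6.1.6 and 6.1.7]{s9} --- and your argument coincides with the proof found in that reference, so there is nothing to fix.
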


The following lemma directly follows from Lemma \ref{l3.1}.

\begin{lemma}\label{lemN}
  Let $\mathfrak{F}$ be a formation, $H$ and  $R$ be  subgroups of  $G$ and $N\trianglelefteq G$. If $H$ $K$-$\mathfrak{F}$-$sn\,R$, then $HN$ $K$-$\mathfrak{F}$-$sn\,RN$.
\end{lemma}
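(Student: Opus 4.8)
The plan is to reduce the whole statement to a single quotient $RN/N$ and then invoke only the two "quotient" parts of Lemma \ref{l3.1}, which is exactly why the claim should follow directly from that lemma. First I would record the structural facts about $N$ that I intend to use: since $N\trianglelefteq G$ and $R\leq G$, the product $RN$ is a subgroup of $G$, $N\trianglelefteq RN$, and $R\cap N\trianglelefteq R$. No other property of $N$ will be needed.

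The core of the argument is a descent-then-lift across the second isomorphism theorem. I would first push the hypothesis down into a quotient of $R$: applying Lemma \ref{l3.1}(1) inside the group $R$ with normal subgroup $R\cap N$, the assumption that $H$ is $K$-$\mathfrak{F}$-$sn\,R$ gives that $H(R\cap N)/(R\cap N)$ is $K$-$\mathfrak{F}$-$sn\,R/(R\cap N)$. Next I would transport this across the canonical isomorphism $RN/N\to R/(R\cap N)$, $rN\mapsto r(R\cap N)$ (for $r\in R$), which carries $HN/N$ onto $H(R\cap N)/(R\cap N)$. Because $K$-$\mathfrak{F}$-subnormality is a purely group-theoretic, hence isomorphism-invariant, property, I conclude that $HN/N$ is $K$-$\mathfrak{F}$-$sn\,RN/N$. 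Finally, Lemma \ref{l3.1}(2), applied with ambient group $RN$ and normal subgroup $N\trianglelefteq RN$, lifts this back to the desired conclusion that $HN$ is $K$-$\mathfrak{F}$-$sn\,RN$.

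I do not anticipate a genuine obstacle, in keeping with the remark that the statement follows directly from Lemma \ref{l3.1}. The only points that demand a moment of care are the verification that the second-isomorphism map sends $HN/N$ exactly onto $H(R\cap N)/(R\cap N)$ — which is immediate since each $h\in H$ gives $hN\mapsto h(R\cap N)$ — and the legitimacy of invoking isomorphism-invariance of $K$-$\mathfrak{F}$-subnormality, which is clear from the defining chain of subgroups: normality of successive terms and membership of the relevant $H_i/\mathrm{Core}_{H_i}(H_{i-1})$ in $\mathfrak{F}$ are both preserved by any isomorphism.

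An alternative, slightly more computational route would be to take a defining chain $H=H_0\subseteq H_1\subseteq\dots\subseteq H_n=R$ and multiply through by $N$ to obtain $HN=H_0N\subseteq H_1N\subseteq\dots\subseteq H_nN=RN$, then check the defining condition at each step. This works, but verifying that each $H_{i-1}N\trianglelefteq H_iN$ or $H_iN/\mathrm{Core}_{H_iN}(H_{i-1}N)\in\mathfrak{F}$ term-by-term is precisely the bookkeeping that the quotient lemmas already package, so I would prefer the isomorphism argument above.
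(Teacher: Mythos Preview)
Your argument is correct and is precisely the unpacking of the paper's one-line justification that the lemma ``directly follows from Lemma~\ref{l3.1}'': you apply Lemma~\ref{l3.1}(1) inside $R$ modulo $R\cap N$, transport across the second isomorphism $R/(R\cap N)\cong RN/N$, and lift with Lemma~\ref{l3.1}(2). The paper gives no further detail, so your proof is essentially the intended one, just made explicit.
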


Recall that $\mathbb{F}_p$ denotes a field with $p$ elements. The following result directly follows from \cite[B, Theorem 10.3]{s8}.

\begin{lemma}\label{10.3B}
   If $\mathrm{O}_p(G)=1$ and $G$ has a unique minimal normal subgroup, then $G$ has  a faithful irreducible   module over $\mathbb{F}_p$.
\end{lemma}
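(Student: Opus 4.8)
The plan is to obtain the faithful irreducible module as a composition factor of a faithful module we already have in hand. Since the left regular $\mathbb{F}_pG$-module is always faithful, I would start from it (or from any faithful $\mathbb{F}_pG$-module $V$) and examine its composition factors $V_1,\dots,V_r$ as $\mathbb{F}_pG$-modules. The goal is to show that at least one $V_i$ is itself faithful; that factor is then the required irreducible module. Both hypotheses, $\mathrm{O}_p(G)=1$ and the uniqueness of the minimal normal subgroup, will be used precisely to rule out the possibility that every $V_i$ has nontrivial kernel.

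The main step is to control the intersection $K=\bigcap_{i=1}^r C_G(V_i)$ of the kernels of the actions of $G$ on the composition factors. Each $C_G(V_i)$ is the kernel of the homomorphism $G\to\mathrm{Aut}(V_i)$ and hence is normal in $G$, so $K\trianglelefteq G$. By construction $K$ acts trivially on every factor of a composition series of $V$; choosing a basis of $V$ adapted to that series, $K$ is therefore represented by upper unitriangular matrices over $\mathbb{F}_p$. Since the group of such matrices is a $p$-group and $V$ is faithful, $K$ is a normal $p$-subgroup of $G$, and the hypothesis $\mathrm{O}_p(G)=1$ forces $K=1$.

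It then remains to exploit the uniqueness of the minimal normal subgroup $N$. If no $V_i$ were faithful, then every $C_G(V_i)$ would be a nontrivial normal subgroup of $G$ and would therefore contain the unique minimal normal subgroup $N$; this would give $N\leq\bigcap_{i=1}^r C_G(V_i)=K=1$, contradicting $N\neq 1$. Hence $C_G(V_i)=1$ for some $i$, and this $V_i$ is a faithful irreducible $\mathbb{F}_pG$-module. The only delicate point I expect is the verification that acting trivially on all composition factors forces a unipotent, hence $p$-, action, so that $K$ is genuinely a $p$-group; this unipotency is exactly the structural fact I would draw from \cite[B, Theorem 10.3], the remaining deductions being formal consequences of the two hypotheses.
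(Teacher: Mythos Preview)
Your argument is correct. Note, however, that the paper does not actually prove this lemma: it merely records it as a direct consequence of \cite[B, Theorem 10.3]{s8}, so there is no ``paper's proof'' to compare against. What you have written is essentially the standard proof of that theorem in the monolithic case: take composition factors of a faithful $\mathbb{F}_pG$-module, show that the intersection $K$ of their kernels is a normal $p$-subgroup (hence trivial by $\mathrm{O}_p(G)=1$), and then use the unique minimal normal subgroup to force one of the kernels to vanish.

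One small correction to your last paragraph: the unipotency fact you flag---that an element centralising every composition factor of a finite $\mathbb{F}_pG$-module acts unipotently and hence has $p$-power order---is \emph{not} the content of \cite[B, Theorem 10.3]{s8}; that theorem is precisely the existence statement you are proving, so invoking it there would be circular. The unipotency is the elementary observation that if $g$ centralises each factor of a composition series $0=W_0<\cdots<W_r=V$ then $(g-1)W_i\subseteq W_{i-1}$, whence $(g-1)^r=0$ on $V$ and, in characteristic $p$, $g^{p^m}-1=(g-1)^{p^m}=0$ once $p^m\ge r$. With this computation inserted, your proof is complete and self-contained, which the paper's bare citation is not.
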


In \cite{SHEM} L.A. Shemetkov possed the problem to describe the set of  formations $\mathfrak{F}$ having the following property $$\mathfrak{F}=(G\mid\textrm{every chief factor of } G \textrm{ is }\mathfrak{F}\textrm{-central})=(G\mid G=\mathrm{Z}_\mathfrak{F}(G)).$$ This class of formations contains saturated (local) and solubly saturated (composition or Baer-local) formations and other.
Shortly we shall call formations from this class \emph{$Z$-saturated.}
In \cite{bb1}  A. Ballester-Bolinches and M. P{\'e}rez-Ramos showed that for a formation $\mathfrak{F}$ the class
$$Z\mathfrak{F}=(G\mid G=\mathrm{Z}_\mathfrak{F}(G))$$ is a formation and $\mathfrak{F}\subseteq Z\mathfrak{F}\subseteq\textbf{E}_\Phi\mathfrak{F}$.

 Let   $\mathfrak{F}$ be a hereditary formation. In \cite{vF} and \cite{wF} the classes  $\overline{w}\mathfrak{F}$ and $v^*\mathfrak{F}$  of all groups all whose Sylow and cyclic primary subgroups respectively are  $K$-$\mathfrak{F}$-subnormal were studied. From the results of these papers follows

 \begin{proposition}\label{wv}
  If $\mathfrak{F}$ is a hereditary formation, then $\overline{w}\mathfrak{F}$ and $v^*\mathfrak{F}$  are   hereditary  formations and $\mathfrak{N}\cup\mathfrak{F}\subseteq\overline{w}\mathfrak{F}\subseteq v^*\mathfrak{F}$.
 \end{proposition}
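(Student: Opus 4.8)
The plan is to verify directly that $\overline{w}\mathfrak{F}$ and $v^*\mathfrak{F}$ are closed under subgroups, quotients and subdirect products, and then to check the three inclusions. Throughout I write $U$ for a Sylow subgroup (in the $\overline{w}\mathfrak{F}$ case) or for a cyclic primary $p$-subgroup (in the $v^*\mathfrak{F}$ case), and I use two elementary facts: every subgroup of a nilpotent group is subnormal, and a subnormal subgroup is $K$-$\mathfrak{F}$-$sn$ for any formation $\mathfrak{F}$, since the subnormal chain itself satisfies the defining condition.

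I would first treat heredity and closure under quotients, which are routine. For $H\le G\in\overline{w}\mathfrak{F}$ a Sylow $p$-subgroup $Q$ of $H$ has the form $Q=H\cap P$ for a suitable Sylow $p$-subgroup $P$ of $G$; since $P$ is $K$-$\mathfrak{F}$-$sn$ in $G$, Lemma~\ref{l3.2}(1) gives that $Q=H\cap P$ is $K$-$\mathfrak{F}$-$sn$ in $H$. For $v^*\mathfrak{F}$ a cyclic primary subgroup of $H$ is already such a subgroup of $G$, and Lemma~\ref{l3.2}(1) with $R=H$ applies directly. Closure under quotients is equally immediate: modulo $N\trianglelefteq G$ the Sylow subgroups of $G/N$ are the $PN/N$ and the cyclic primary subgroups are the $\langle x\rangle N/N=\langle xN\rangle$ with $x$ of prime-power order, so Lemma~\ref{l3.1}(1) gives the result.

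The hard part will be closure under subdirect products. Passing to $G/(N_1\cap N_2)$ I may assume $N_1\cap N_2=1$ with $G/N_1,G/N_2$ in the class, and I must place $G$ in the class. Fix $U$ as above. Then $UN_i/N_i$ is a Sylow, resp.\ cyclic primary, subgroup of $G/N_i$, hence $K$-$\mathfrak{F}$-$sn$ there, so by Lemma~\ref{l3.1}(2) each $UN_i$ is $K$-$\mathfrak{F}$-$sn$ in $G$, and by Lemma~\ref{l3.2}(2) so is $D:=UN_1\cap UN_2$. The crux is that $D$ is a $p$-group: writing $K_i=N_i\cap D\trianglelefteq D$ one has $D/K_i\cong DN_i/N_i\le UN_i/N_i$, a $p$-group, while $K_1\cap K_2\le N_1\cap N_2=1$, so $D$ embeds in $D/K_1\times D/K_2$ and is a $p$-group. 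In the $\overline{w}\mathfrak{F}$ case $U=P$ is then a Sylow $p$-subgroup of $G$ contained in the $p$-group $D$, forcing $P=D$, which is $K$-$\mathfrak{F}$-$sn$ in $G$. In the $v^*\mathfrak{F}$ case $U=\langle x\rangle$ is a subgroup of the nilpotent group $D$, hence subnormal and therefore $K$-$\mathfrak{F}$-$sn$ in $D$, and Lemma~\ref{l3.1}(3) yields that $U$ is $K$-$\mathfrak{F}$-$sn$ in $G$. Thus both classes are hereditary formations.

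It remains to record the inclusions. If $G\in\mathfrak{F}$ then for every $H\le G$ one has $G/\mathrm{Core}_G(H)\in\mathfrak{F}$, so the one-term chain $H\subseteq G$ shows $H$ is $K$-$\mathfrak{F}$-$sn$ in $G$; in particular $\mathfrak{F}\subseteq\overline{w}\mathfrak{F}$. If $G\in\mathfrak{N}$ its Sylow subgroups are normal, hence $K$-$\mathfrak{F}$-$sn$, so $\mathfrak{N}\subseteq\overline{w}\mathfrak{F}$. Finally, for $G\in\overline{w}\mathfrak{F}$ a cyclic primary subgroup lies in some Sylow subgroup $P$, which is $K$-$\mathfrak{F}$-$sn$ in $G$; being a subgroup of the nilpotent group $P$ it is subnormal, hence $K$-$\mathfrak{F}$-$sn$, in $P$, and Lemma~\ref{l3.1}(3) gives $\overline{w}\mathfrak{F}\subseteq v^*\mathfrak{F}$.
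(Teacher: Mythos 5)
Your proof is correct, but note that the paper itself gives no argument for this proposition: it is imported from the cited works \cite{vF} and \cite{wF} (``From the results of these papers follows\dots''), so your self-contained derivation is a genuine addition rather than a variant of an internal proof. The routine parts are as expected: heredity via Lemma~\ref{l3.2}(1) applied to $Q=H\cap P$, quotient closure via Lemma~\ref{l3.1}(1) once one observes that Sylow and cyclic primary subgroups of $G/N$ are images of such subgroups of $G$, and the three inclusions from the one-term chain $H\subseteq G$ (for $\mathfrak{F}\subseteq\overline{w}\mathfrak{F}$) and from subnormality inside nilpotent subgroups. The one substantive idea is your $R_0$-closure argument: form $D=UN_1\cap UN_2$, keep $K$-$\mathfrak{F}$-subnormality of $D$ via Lemma~\ref{l3.2}(2) --- which is exactly where heredity of $\mathfrak{F}$ is used --- and then show $D$ is a $p$-group by embedding it into $D/(D\cap N_1)\times D/(D\cap N_2)$; this forces $D=P$ in the Sylow case and makes $U$ subnormal in $D$ in the cyclic primary case, after which Lemma~\ref{l3.1}(3) finishes. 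This is clean and complete, and it has the advantage of making the present paper independent of the external references for this statement. Two very minor presentational points: you could add one line justifying that every cyclic primary subgroup of $G/N$ is the image of one of $G$ (replace a generator $gN$ of $p$-power order by the image of the $p$-part of $g$), and you could remark explicitly that closure under subdirect products for two normal subgroups suffices by induction; neither omission is a gap.
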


Recall that a Schmidt group $G$ is a non-nilpotent group all whose proper subgroups are nilpotent.  It is well known that $\pi(G)=\{p, q\}$ and $ G$ has a unique normal Sylow subgroup.
Recall \cite{VM} that a Schmidt $(p, q)$-group
is a Schmidt group  with a normal Sylow $p$-subgroup. An \emph{$N$-critical graph} $\Gamma_{Nc}(G)$ of a
 group $G$ \cite[Definition 1.3]{VM} is a directed graph on the vertex set $\pi(G)$ of all prime divisors
 of $|G|$ and $(p, q)$ is an edge of   $\Gamma_{Nc}(G)$ iff $G$ has  a Schmidt $(p, q)$-subgroup.
 An \emph{$N$-critical graph} $\Gamma_{Nc}(\mathfrak{X})$ of a class of groups $\mathfrak{X}$
 \cite[Definition 3.1]{VM} is a directed graph on the vertex set $\pi(\mathfrak{X})=\cup_{G\in\mathfrak{X}}\pi(G)$
 such that $\Gamma_{Nc}(\mathfrak{X})=\cup_{G\in\mathfrak{X}}\Gamma_{Nc}(G)$.

\begin{proposition}[{\cite[Theorem 5.4]{VM}}]\label{5.4}
Let $\sigma=\{\pi_i\mid i\in I\}$ be a partition of the vertex set $V(\Gamma_{Nc}(\mathfrak{X}))$ such that for $i\neq j$ there are no edges between $\pi_i$ and $\pi_j$. Then every $\mathfrak{X}$-group is
the direct product of its Hall $\pi_k$-subgroups, where $k\in \{i \in I \mid \pi(G) \cap \pi_k\neq\emptyset\}$.
\end{proposition}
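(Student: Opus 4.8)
The plan is to recognize that Proposition~\ref{5.4} is, after two reformulations, a purely group-theoretic statement about Schmidt subgroups. First, a group $G$ is the direct product of its Hall $\pi_k$-subgroups over the blocks $\pi_k$ meeting $\pi(G)$ if and only if $G$ is $\sigma$-nilpotent in the sense of Skiba; this is essentially the definition of $\mathfrak{N}_\sigma$, so the conclusion of Proposition~\ref{5.4} is exactly ``$G\in\mathfrak{N}_\sigma$''. Second, by the definitions of the $N$-critical graph and of a Schmidt $(p,q)$-group, the hypothesis that $\sigma$ partitions $V(\Gamma_{Nc}(\mathfrak{X}))$ with no edges between distinct blocks says precisely: for every $G\in\mathfrak{X}$ and every Schmidt subgroup $S\leq G$, the two primes dividing $|S|$ lie in one and the same block $\pi_i$. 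Since $\Gamma_{Nc}(\mathfrak{X})=\cup_{G}\Gamma_{Nc}(G)$, this holds for each individual $G\in\mathfrak{X}$, so the argument reduces to a single group.

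It then suffices to prove the following statement $(\star)$: every minimal non-$\sigma$-nilpotent group contains a Schmidt subgroup whose two primes lie in different blocks of $\sigma$. Granting $(\star)$, suppose some $G\in\mathfrak{X}$ is not $\sigma$-nilpotent and choose a subgroup $S\leq G$ of least order that is not $\sigma$-nilpotent. Because $\mathfrak{N}_\sigma$ is subgroup-closed, every proper subgroup of $S$ is $\sigma$-nilpotent, so $S$ is minimal non-$\sigma$-nilpotent. By $(\star)$ it contains a Schmidt subgroup whose primes lie in two different blocks; but this is also a Schmidt subgroup of $G$, contradicting the reformulated hypothesis. Hence every $G\in\mathfrak{X}$ is $\sigma$-nilpotent, which is the conclusion.

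To prove $(\star)$ I would use that $\mathfrak{N}_\sigma$ is a saturated (local) formation with local function $f(p)=\mathfrak{G}_{\pi_i}$, the class of all $\pi_i$-groups, where $\pi_i$ is the block containing $p$; thus a minimal non-$\sigma$-nilpotent group $S$ is an $\mathfrak{N}_\sigma$-critical group and the standard theory of critical groups applies. Note first that $S$ is non-nilpotent (nilpotent groups are $\sigma$-nilpotent), so by Schmidt's classical theorem $S$ already possesses Schmidt subgroups; the entire difficulty is to locate a \emph{cross-block} one. Using saturation, I would make the Frattini reduction (so that $S/\Phi(S)$ is again minimal non-$\sigma$-nilpotent) to pass to the monolithic situation and single out an $\mathfrak{N}_\sigma$-eccentric chief factor, which after the reduction may be taken to be a minimal normal subgroup $N$, with $S/C_S(N)\notin f(p)=\mathfrak{G}_{\pi_i}$ for $p\mid|N|$, $p\in\pi_i$. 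Eccentricity forces a prime $q\mid|S/C_S(N)|$ with $q\in\pi_j$, $j\neq i$, and hence a $q$-element $x\in S$ acting nontrivially on $N$. When $N$ is abelian I regard it as an $\mathbb{F}_pS$-module (Lemma~\ref{10.3B} being available to secure a faithful irreducible module in the reduced situation where $\mathrm{O}_p=1$ and there is a unique minimal normal subgroup), choose a minimal $\langle x\rangle$-invariant nontrivial subspace, and form its semidirect product with a suitable cyclic $q$-subgroup of $\langle x\rangle$; this is a minimal non-nilpotent, hence Schmidt, $(p,q)$-subgroup of $S$ with $p\in\pi_i$ and $q\in\pi_j$, exactly as required.

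The main obstacle is the non-abelian, equivalently non-soluble, case: a minimal normal subgroup may be a direct product of non-abelian simple groups, so there is no $\mathbb{F}_p$-module on which to build the Schmidt group directly. The crux is therefore to show that a minimal non-$\sigma$-nilpotent group is soluble, or, absent a direct solubility proof, to extract a cross-block soluble Schmidt subgroup from a non-abelian chief factor together with the cross-block action of $S$ upon it. I would argue that if all primes of such a factor lay in a single block it could not be eccentric, so a prime of the factor or of $S/C_S(N)$ must fall in another block; one then builds a Schmidt $(p,q)$-subgroup inside a soluble subsection. Matching these cross-block primes to an \emph{actual} minimal non-nilpotent subgroup of $S$, uniformly and in the non-soluble case, is the step I expect to be the most delicate.
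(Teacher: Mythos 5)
The paper does not prove this statement at all: Proposition~\ref{5.4} is quoted verbatim from \cite[Theorem 5.4]{VM} and used as a black box, so there is no in-paper argument to compare yours against. Judged on its own, your reduction is the right one and the soluble half is essentially correct: the conclusion is equivalent to $G\in\mathfrak{N}_\sigma$, the hypothesis is equivalent to ``every Schmidt subgroup of every $\mathfrak{X}$-group has both its primes in one block,'' and it suffices to find a cross-block Schmidt subgroup in a minimal non-$\sigma$-nilpotent group; when the relevant minimal normal subgroup is an abelian $p$-group, a $q$-element acting nontrivially on it with $q$ outside the block of $p$ does yield a Schmidt $(p,q)$-subgroup (every subgroup of $N\rtimes\langle x\rangle$ has a normal Sylow $p$-subgroup, so any non-nilpotent subgroup contains a Schmidt $(p,q)$-group).

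However, the proof has a genuine, and self-acknowledged, gap exactly where the theorem stops being classical: the non-soluble case. If the minimal normal subgroup $N$ of your reduced group is a product of non-abelian simple groups, your argument produces no Schmidt subgroup at all, and the needed fact is precisely that the $N$-critical graph of a non-abelian simple group cannot be split by $\sigma$ without a cross-block edge --- equivalently, that $\Gamma_{Nc}(S)$ is connected for non-abelian simple $S$ (note that a simple group is never a nontrivial direct product, so the proposition forces this connectivity; your hypothesis does not exclude a simple $G$ all of whose Schmidt subgroups are single-block, and ruling that out is the whole point). This is not a routine verification: in \cite{VM} and in the related literature on Sylow-type graphs (e.g.\ Kazarin, Martinez-Pastor and P$\acute{e}$rez-Ramos \cite{Kazarin2011}) the simple-group case rests on deep classification-dependent information, and no elementary ``soluble subsection'' argument of the kind you sketch is known to substitute for it. A secondary, smaller gap: your Frattini reduction and the passage from a chief factor of $S/\Phi(S)$ back to an actual Schmidt subgroup of $S$ needs the lemma $\Gamma_{Nc}(G/N)\subseteq\Gamma_{Nc}(G)$ with \emph{both} primes preserved; Ito's theorem only hands you a Schmidt $(p,r)$-subgroup for some $r$, and $r$ need not lie in the block you want, so this lifting must be argued, not assumed.
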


\section{The proof of Theorem \ref{gb} and its corollaries}

   The proof of  Theorem  \ref{gb}  is rather complicated and require  various  preliminary results and definitions. A subgroup $U$ of  $G$ is called $\mathfrak{X}$-\emph{maximal} in $G$
provided that $(a)$ $U\in\mathfrak{X}$, and $(b)$ if $U\leq V \leq G$ and $V\in\mathfrak{X}$, then $U = V$ \cite[III, Definition 3.1]{s8}. The symbol $\mathrm{Int}_\mathfrak{X}(G)$
denotes the intersection of all $\mathfrak{X}$-maximal subgroups of $G$ \cite{h4}.


\begin{proposition}\label{l5} Let $\mathfrak{F}$ be a hereditary formation.
Then
\begin{enumerate}[$(1)$]
  \item  {\cite[Lemma 2.4]{Aivazidis2021}} $\mathrm{Z}_\mathfrak{F}(G)\cap H\leq\mathrm{Z}_\mathfrak{F}(H)$ for every subgroup $H$ of a group $G$.

  \item  $\mathrm{Z}_\mathfrak{F}(G)=\mathrm{Z}_\mathfrak{F}(\mathrm{Z}_\mathfrak{F}(G))$ for every group $G$.

  \item Assume that $H$ is an $\mathfrak{F}$-subgroup of a group $G$.    If  $\mathfrak{F}$ is $Z$-saturated, then $H\mathrm{Z}_\mathfrak{F}(G)\in\mathfrak{F}$. In particular $\mathrm{Z}_\mathfrak{F}(G)\leq\mathrm{Int}_\mathfrak{F}(G)$ for every group $ G$.
\end{enumerate}

\end{proposition}

\begin{proof} 

(2)  From $ (1)$ it follows that $\mathrm{Z}_\mathfrak{F}(G)=\mathrm{Z}_\mathfrak{F}(G)\cap \mathrm{Z}_\mathfrak{F}(G)\leq \mathrm{Z}_\mathfrak{F}(\mathrm{Z}_\mathfrak{F}(G))\leq \mathrm{Z}_\mathfrak{F}(G)$.
Thus $\mathrm{Z}_\mathfrak{F}(G)=\mathrm{Z}_\mathfrak{F}(\mathrm{Z}_\mathfrak{F}(G))$.

  (3) From $(1)$ it follows that $\mathrm{Z}_\mathfrak{F}(G)\leq \mathrm{Z}_\mathfrak{F}(G)\cap H\mathrm{Z}_\mathfrak{F}(G)\leq \mathrm{Z}_\mathfrak{F}(H\mathrm{Z}_\mathfrak{F}(G))$. Since  the  group $H\mathrm{Z}_\mathfrak{F}(G)/\mathrm{Z}_\mathfrak{F}(G)\in\mathfrak{F}$, we see that   $H\mathrm{Z}_\mathfrak{F}(G)/\mathrm{Z}_\mathfrak{F}(H\mathrm{Z}_\mathfrak{F}(G))\in\mathfrak{F}$. Hence $H\mathrm{Z}_\mathfrak{F}(G)=\mathrm{Z}_\mathfrak{F}(H\mathrm{Z}_\mathfrak{F}(G))\in Z\mathfrak{F}=\mathfrak{F}$.

 Let $M$ be an $\mathfrak{F}$-maximal subgroup of  $G$. Then $M\mathrm{Z}_\mathfrak{F}(G)\in\mathfrak{F}$. It means that $M\mathrm{Z}_\mathfrak{F}(G)=M$. Thus  $\mathrm{Z}_\mathfrak{F}(G)\leq\mathrm{Int}_\mathfrak{F}(G)$.
\end{proof}


The following result plays the key role  in the proof of Theorem \ref{gb}.

\begin{proposition}\label{p9}
Let  $\mathfrak{F}$  be a formation.
\begin{enumerate}[$(1)$]
\item $\mathrm{Z}_{Z\mathfrak{F}}(G)=\mathrm{Z}_\mathfrak{F}(G)$ holds for every group  $G$.

\item Assume that $\mathfrak{F}$ is hereditary. A subgroup  $H$  is $K$-$\mathfrak{F}$-subnormal in a group $ G$ iff it is $K$-$Z\mathfrak{F}$-subnormal in $ G$.
    \end{enumerate}
\end{proposition}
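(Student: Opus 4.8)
The plan is to isolate a single structural fact and feed it to both parts: \emph{every primitive group lying in $Z\mathfrak{F}$ already lies in $\mathfrak{F}$.} The inclusion $\mathfrak{F}\subseteq Z\mathfrak{F}$ recorded before the statement makes the ``easy'' halves of both parts immediate: an $\mathfrak{F}$-central chief factor is $Z\mathfrak{F}$-central, so $\mathrm{Z}_\mathfrak{F}(G)\leq\mathrm{Z}_{Z\mathfrak{F}}(G)$, and a $K$-$\mathfrak{F}$-subnormal chain is automatically a $K$-$Z\mathfrak{F}$-subnormal chain. Everything then rests on the reverse implications, both of which I will reduce to the key fact.

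To prove the key fact, let $P$ be primitive with $P\in Z\mathfrak{F}$; by definition of $Z\mathfrak{F}$ every chief factor of $P$ is $\mathfrak{F}$-central in $P$, and I argue by the type of the socle $N=\mathrm{Soc}(P)$ (cf. \cite{s8}). If $P$ is of type $1$, then $N$ is abelian and self-centralizing, so $\mathfrak{F}$-centrality of $N/1$ reads $N\rtimes(P/C_P(N))=N\rtimes(P/N)\cong P\in\mathfrak{F}$. If $P$ is of type $2$, then $C_P(N)=1$, so $\mathfrak{F}$-centrality of $N/1$ gives $N\rtimes P\in\mathfrak{F}$, whose quotient by the external copy of $N$ is $P$, whence $P\in\mathfrak{F}$. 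If $P$ is of type $3$, with minimal normal subgroups $N_1,N_2$ satisfying $C_P(N_i)=N_{3-i}$ and $N_1\cap N_2=1$, then $\mathfrak{F}$-centrality of $N_i/1$ yields $N_i\rtimes(P/N_{3-i})\in\mathfrak{F}$ and hence $P/N_{3-i}\in\mathfrak{F}$ after factoring out the external $N_i$; since $N_1\cap N_2=1$, the group $P$ is a subdirect product of $P/N_1$ and $P/N_2$, so $P\in\mathfrak{F}$. This case analysis is the main obstacle: the abelian case is a one-line isomorphism, whereas the nonabelian cases force one through the quotient and subdirect-product descriptions and rely on the standard centralizer relations for primitive groups of types $2$ and $3$.

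For statement $(1)$ I recall that $\mathrm{Z}_\mathfrak{X}(G)$ is the largest normal subgroup all of whose $G$-chief factors are $\mathfrak{X}$-central, so it suffices to show that a chief factor $H/K$ of $G$ is $\mathfrak{F}$-central iff it is $Z\mathfrak{F}$-central. Writing $A=H/K$ and $B=G/C_G(H/K)$, this is exactly the question whether $X=A\rtimes B\in\mathfrak{F}\iff X\in Z\mathfrak{F}$. The point is that $X$ is primitive: for abelian $A$ it is of type $1$, while for nonabelian $A$ the image of $H$ in $B$ induces all inner automorphisms of $A$, so $C_X(A)\neq1$ and $X$ is of type $3$ (exactly as in the proof of Proposition \ref{l5}). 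The forward direction is the inclusion $\mathfrak{F}\subseteq Z\mathfrak{F}$, and the reverse is the key fact applied to $X$; hence $\mathrm{Z}_\mathfrak{F}(G)=\mathrm{Z}_{Z\mathfrak{F}}(G)$.

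For the reverse implication in $(2)$ the crux is the auxiliary claim that \emph{if $\mathfrak{F}$ is hereditary and $M\in Z\mathfrak{F}$, then every subgroup of $M$ is $K$-$\mathfrak{F}$-subnormal in $M$}. First I note that $Z\mathfrak{F}$ is itself hereditary: if $M\in Z\mathfrak{F}$ and $U\leq M$, then Proposition \ref{l5} gives $U=M\cap U\leq\mathrm{Z}_\mathfrak{F}(U)\leq U$, so $U\in Z\mathfrak{F}$. I then prove the auxiliary claim by induction on $|M|$: for $H\lneq M$ choose a maximal subgroup $U$ with $H\leq U\lneq M$; since $Z\mathfrak{F}$ is a formation, $M/\mathrm{Core}_M(U)$ is a primitive group lying in $Z\mathfrak{F}$, so the key fact gives $M/\mathrm{Core}_M(U)\in\mathfrak{F}$, and the one-step chain $U\subseteq M$ shows $U$ is $K$-$\mathfrak{F}$-subnormal in $M$; as $U\in Z\mathfrak{F}$ and $|U|<|M|$, induction makes $H$ $K$-$\mathfrak{F}$-subnormal in $U$, and transitivity (Lemma \ref{l3.1}) finishes. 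Finally, given a $K$-$Z\mathfrak{F}$-subnormal chain for $H$ in $G$, each non-normal step $H_{i-1}\subseteq H_i$ has $H_i/\mathrm{Core}_{H_i}(H_{i-1})\in Z\mathfrak{F}$; applying the auxiliary claim inside this quotient and lifting by Lemma \ref{l3.1}(2) makes each $H_{i-1}$ $K$-$\mathfrak{F}$-subnormal in $H_i$, and transitivity along the chain yields that $H$ is $K$-$\mathfrak{F}$-subnormal in $G$.
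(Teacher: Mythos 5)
Your proof is correct, and its skeleton is the same as the paper's: both arguments hinge on the single fact that a primitive group lies in $Z\mathfrak{F}$ if and only if it lies in $\mathfrak{F}$, and both then observe that $\mathfrak{X}$-centrality of chief factors and (for hereditary $\mathfrak{X}$) $K$-$\mathfrak{X}$-subnormality are determined by the class of primitive $\mathfrak{X}$-groups. Where you diverge is in how the key fact is obtained: the paper imports the inclusion $\mathfrak{F}\subseteq Z\mathfrak{F}\subseteq\textbf{E}_\Phi\mathfrak{F}$ from \cite{bb1} and uses that a primitive group has trivial Frattini subgroup, whereas you prove the statement from scratch by running through the three types of primitive groups and exploiting closure of formations under quotients and subdirect products --- a self-contained and slightly longer route that buys independence from the cited result. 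For part $(2)$ the paper merely asserts that for a hereditary formation every $K$-$\mathfrak{F}$-subnormal chain can be refined to one through maximal subgroups with primitive $\mathfrak{F}$-quotients; your inductive auxiliary claim (every subgroup of a $Z\mathfrak{F}$-group is $K$-$\mathfrak{F}$-subnormal) makes that refinement explicit and is, if anything, more careful than the original. Both treatments need $Z\mathfrak{F}$ to be a (hereditary) formation, which you correctly extract from Proposition \ref{l5}.
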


\begin{proof}
$(1)$ Let $H/K$ be a chief factor of a group $G$. Now $(H/K)\rtimes G/C_G(H/K)$ is a primitive group. It means that the $\mathfrak{F}$-hypercenter  is defined by the set of all primitive   $\mathfrak{F}$-groups.  According to \cite{bb1}   $\mathfrak{F}\subseteq Z\mathfrak{F}\subseteq \textbf{E}_\Phi\mathfrak{F}$. It means that every $Z\mathfrak{F}$-group $G$ with $\Phi(G)=1$ belongs $\mathfrak{F}$.     Thus the sets of all primitive $\mathfrak{F}$-groups and $Z\mathfrak{F}$-groups coincide. Hence  $\mathrm{Z}_{Z\mathfrak{F}}(G)=\mathrm{Z}_\mathfrak{F}(G)$.

$(2)$ Note that $Z\mathfrak{F}$ is a hereditary formation by Statement $(1)$ of Proposition \ref{l5}. Since $\mathfrak{F}$ is a hereditary formation, we see that $H$ is a  $K$-$\mathfrak{F}$-subnormal subgroup of a group $G$ if and only if there is a chain of subgroups
$ H=H_0\subseteq H_1\subseteq\dots\subseteq H_n=G$
with $H_{i-1}\trianglelefteq H_i$ or $H_{i}/\mathrm{Core}_{H_{i}}(H_{i-1})\in\mathfrak{F}$ and $H_{i-1}$ is a maximal subgroup of $H_i$  for all $i=1,\dots,n$. It means that $K$-$\mathfrak{F}$-subnormality is defined by the set of all primitive $\mathfrak{F}$-groups for a hereditary formation $ \mathfrak{F}$.  As we have already mentioned  the sets of all primitive $\mathfrak{F}$-groups and $Z\mathfrak{F}$-groups coincide. Thus a subgroup   is $K$-$\mathfrak{F}$-subnormal in a group $ G$ iff it is $K$-$Z\mathfrak{F}$-subnormal in $ G$.
\end{proof}

The next step in the proof of Theorem \ref{gb} is to characterize the intersections  $S_\mathfrak{F}(G)$ and $C_\mathfrak{F}(G)$ of all weak $K$-$\mathfrak{F}$-subnormalizers of all Sylow and all cyclic primary subgroups of $G$ respectively.

\begin{proposition}\label{p1} Let $\mathfrak{F}$ be a hereditary formation.
\begin{enumerate}[$(1)$]
 \item $S_\mathfrak{F}(G)$ is the largest subgroup among normal subgroups $N$ of $G$ with  $P$ $K$-$\mathfrak{F}$-$sn\,PN$ for every Sylow subgroup $P$ of $G$.

   \item$C_\mathfrak{F}(G)$ is the largest subgroup among normal subgroups $N$ of $G$ with   $C$ $K$-$\mathfrak{F}$-$sn\,CN$ for every cyclic primary subgroup $C$ of $G$.
 \end{enumerate}
 \end{proposition}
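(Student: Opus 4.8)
The plan is to establish both statements by one and the same argument, carried out for a Sylow subgroup $P$ in part $(1)$ and verbatim for a cyclic primary subgroup $C$ in part $(2)$; I describe $(1)$ and write $S=S_\mathfrak{F}(G)$. Two preliminary observations come first. A weak $K$-$\mathfrak{F}$-subnormalizer of any $H\leq G$ exists, because the family of subgroups $M$ with $H$ being $K$-$\mathfrak{F}$-subnormal in $M$ is finite and nonempty (it contains $H$), so it has a maximal member, and such a member is exactly a weak $K$-$\mathfrak{F}$-subnormalizer of $H$. Moreover, if $T$ is a weak $K$-$\mathfrak{F}$-subnormalizer of $P$, then $T^{g}$ is one of $P^{g}$ for every $g\in G$; since $P^{g}$ ranges over all Sylow $p$-subgroups, conjugation permutes the whole family of weak $K$-$\mathfrak{F}$-subnormalizers of all Sylow subgroups, and therefore its intersection $S$ is normal in $G$.

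The heart of the proof is then two claims whose combination is exactly the assertion. The first claim is that every normal subgroup $N$ of $G$ satisfying $P$ $K$-$\mathfrak{F}$-subnormal in $PN$ for all Sylow $P$ is contained in $S$. To see this I fix such an $N$ and an arbitrary weak $K$-$\mathfrak{F}$-subnormalizer $T$ of a Sylow subgroup $P$, and show $N\leq T$. From $P$ being $K$-$\mathfrak{F}$-subnormal in $T$ and Lemma \ref{lemN} I obtain that $PN$ is $K$-$\mathfrak{F}$-subnormal in $TN$; the hypothesis on $N$ gives $P$ $K$-$\mathfrak{F}$-subnormal in $PN$; and transitivity (Lemma \ref{l3.1}$(3)$) then yields $P$ $K$-$\mathfrak{F}$-subnormal in $TN$. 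As $T\leq TN$, the maximality built into the definition of a weak $K$-$\mathfrak{F}$-subnormalizer forces $T=TN$, that is, $N\leq T$; intersecting over all such $T$ gives $N\leq S$.

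The second claim is that $S$ itself has the defining property, i.e. $P$ is $K$-$\mathfrak{F}$-subnormal in $PS$ for every Sylow $P$. Here I fix $P$ and choose any weak $K$-$\mathfrak{F}$-subnormalizer $T$ of $P$, which exists by the preliminary remark. Since $S$ is the intersection of the entire family, $S\leq T$, hence $PS\leq T$. Now $P$ is $K$-$\mathfrak{F}$-subnormal in $T$, so by Lemma \ref{l3.2}$(1)$ (which uses that $\mathfrak{F}$ is hereditary), applied with $R=PS$, the subgroup $P=P\cap PS$ is $K$-$\mathfrak{F}$-subnormal in $PS$. Together with the first preliminary observation that $S$ is normal, this shows $S$ belongs to the family of normal subgroups with the stated property, while the first claim shows it contains every member of that family; hence $S$ is the largest such subgroup, as required.

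I expect the main obstacle to be organizational rather than deep: the proof hinges on invoking the three subnormality lemmas in precisely the right direction, and the genuinely load-bearing step is Lemma \ref{lemN}, which permits multiplying both sides of a $K$-$\mathfrak{F}$-subnormality relation by the normal subgroup $N$ and so lets the maximality of a weak $K$-$\mathfrak{F}$-subnormalizer be exploited; the one conceptual point requiring care is the normality of $S$, since an individual weak $K$-$\mathfrak{F}$-subnormalizer need be neither normal nor unique, and only the conjugation-invariance of the whole family rescues it.
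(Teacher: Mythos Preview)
Your proof is correct and follows essentially the same route as the paper's: both use Lemma~\ref{lemN} together with transitivity (Lemma~\ref{l3.1}(3)) to show that any normal $N$ with the property lies in every weak $K$-$\mathfrak{F}$-subnormalizer, and both use Lemma~\ref{l3.2}(1) (heredity) to verify that $S_\mathfrak{F}(G)$ itself has the property. You add two details the paper leaves implicit---the existence of weak $K$-$\mathfrak{F}$-subnormalizers and the conjugation-invariance argument for normality of $S_\mathfrak{F}(G)$---which only makes your version more complete.
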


 \begin{proof}
   $(1)$ Let $N\trianglelefteq G$ with  $P$ $K$-$\mathfrak{F}$-$sn\,PN$ for every Sylow subgroup   $P$ of $G$. If $S$ is a weak  $K$-$\mathfrak{F}$-subnormalizer of  $P$ in $G$, then $PN$  $K$-$\mathfrak{F}$-$sn\, SN$  by Lemma \ref{lemN}. Hence $P$  $K$-$\mathfrak{F}$-$sn\, SN$ by   $(3)$ of Lemma \ref{l3.1}.  Now $SN=S$ by the definition of a weak   $K$-$\mathfrak{F}$-subnormalizer. Thus $N\leq S_\mathfrak{F}(G)$.

   From the other hand, since  $\mathfrak{F}$ is a hereditary formation and $PS_\mathfrak{F}(G)$ lies in every weak $K$-$\mathfrak{F}$-subnormalizer of every Sylow subgroup $P$ of $G$, we see that $P$ $K$-$\mathfrak{F}$-$sn\,PS_\mathfrak{F}(G)$ for every Sylow subgroup   $P$ of $G$ by Lemma \ref{l3.2}.  Thus $S_\mathfrak{F}(G)$ is the largest normal subgroup  $N$ of $G$ with $P$ $K$-$\mathfrak{F}$-$sn\,PN$ for every Sylow subgroup $P$ of $G$.

  The proof of $(2)$ is the same. \end{proof}

The connections between the previous steps are shown in the following proposition:

\begin{proposition}\label{p2} Let $\mathfrak{F}$ be a hereditary formation. Then $\overline{w}\mathfrak{F}$ and $v^*\mathfrak{F}$  are   hereditary $Z$-saturated formations and $\mathrm{Int}_{\overline{w}\mathfrak{F}}(G)=S_\mathfrak{F}(G)\leq C_\mathfrak{F}(G)=\mathrm{Int}_{v^*\mathfrak{F}}(G)$ holds for every group $G$.
 \end{proposition}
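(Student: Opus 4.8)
The plan is to split the double statement $\mathrm{Int}_{\overline{w}\mathfrak{F}}(G)=S_\mathfrak{F}(G)\leq C_\mathfrak{F}(G)=\mathrm{Int}_{v^*\mathfrak{F}}(G)$ into four tasks: the $Z$-saturation of $\overline{w}\mathfrak{F}$ and $v^*\mathfrak{F}$; the identification of each $\mathrm{Int}$ with the corresponding hypercenter; the identification of each hypercenter with $S_\mathfrak{F}(G)$, respectively $C_\mathfrak{F}(G)$; and the middle inequality. By Proposition \ref{wv} both classes are already hereditary formations containing $\mathfrak{N}$, so only the remaining assertions need proof.

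First I would establish that $\overline{w}\mathfrak{F}$ and $v^*\mathfrak{F}$ are $Z$-saturated, i.e. that a group all of whose chief factors are central for one of these classes already lies in it. Since $\mathfrak{X}\subseteq Z\mathfrak{X}$ always holds, the content is the reverse inclusion: if $G=\mathrm{Z}_\mathfrak{X}(G)$ for $\mathfrak{X}\in\{\overline{w}\mathfrak{F},v^*\mathfrak{F}\}$, then every Sylow (respectively cyclic primary) subgroup of $G$ is $K$-$\mathfrak{F}$-subnormal. I would argue by induction on $|G|$: a minimal normal subgroup $N$ is $\mathfrak{X}$-central and $G/N\in\mathfrak{X}$, and I would reassemble these using Lemma \ref{l3.1} together with the Schmidt-subgroup and $N$-critical-graph description behind Proposition \ref{5.4}, which controls precisely which primary subgroups can fail to be $K$-$\mathfrak{F}$-subnormal.

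Next, for a hereditary $Z$-saturated formation $\mathfrak{X}$ I would prove $\mathrm{Int}_\mathfrak{X}(G)=\mathrm{Z}_\mathfrak{X}(G)$. Corollary \ref{l5.1} already gives $\mathrm{Z}_\mathfrak{X}(G)\leq\mathrm{Int}_\mathfrak{X}(G)$, so the task is the reverse inclusion: every chief factor of $G$ below the characteristic subgroup $D=\mathrm{Int}_\mathfrak{X}(G)$ is $\mathfrak{X}$-central. Fixing such a chief factor $H/K$ and passing to $G/K$, I would assume it non-central and, using Lemma \ref{10.3B} to realize the relevant quotient faithfully and irreducibly on a suitable module, produce an $\mathfrak{X}$-maximal subgroup missing $D$, contradicting the definition of $\mathrm{Int}$. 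This module-theoretic step, where $Z$-saturation is essential, is the main obstacle of the whole proof.

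Finally I would tie the hypercenters to the subnormalizer intersections and close the chain. For each Sylow subgroup $P$ we have $P\in\mathfrak{N}\subseteq\overline{w}\mathfrak{F}$, so Corollary \ref{l5.1} gives $P\mathrm{Z}_{\overline{w}\mathfrak{F}}(G)\in\overline{w}\mathfrak{F}$; as $P$ is a Sylow subgroup of $P\mathrm{Z}_{\overline{w}\mathfrak{F}}(G)$ it is $K$-$\mathfrak{F}$-subnormal there, so $\mathrm{Z}_{\overline{w}\mathfrak{F}}(G)$ satisfies the defining condition in Proposition \ref{p1}(1) and hence $\mathrm{Z}_{\overline{w}\mathfrak{F}}(G)\leq S_\mathfrak{F}(G)$; the same argument with cyclic primary subgroups and $v^*\mathfrak{F}$ gives $\mathrm{Z}_{v^*\mathfrak{F}}(G)\leq C_\mathfrak{F}(G)$. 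For the opposite inclusions I would show $S_\mathfrak{F}(G)\leq\mathrm{Int}_{\overline{w}\mathfrak{F}}(G)$ directly: given an $\overline{w}\mathfrak{F}$-maximal subgroup $M$, every Sylow $p$-subgroup $Q$ of $MS_\mathfrak{F}(G)$ equals $P\cap MS_\mathfrak{F}(G)$ for a Sylow $p$-subgroup $P$ of $G$, and from $P$ $K$-$\mathfrak{F}$-$sn\,PS_\mathfrak{F}(G)$ together with Lemma \ref{l3.2}(1) and Dedekind's identity one gets $Q$ $K$-$\mathfrak{F}$-$sn\,QS_\mathfrak{F}(G)$; combining this with the $K$-$\mathfrak{F}$-subnormality of $QS_\mathfrak{F}(G)/S_\mathfrak{F}(G)$ in the $\overline{w}\mathfrak{F}$-quotient $MS_\mathfrak{F}(G)/S_\mathfrak{F}(G)$ and Lemma \ref{l3.1} shows $MS_\mathfrak{F}(G)\in\overline{w}\mathfrak{F}$, so $S_\mathfrak{F}(G)\leq M$. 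The analogous computation gives $C_\mathfrak{F}(G)\leq\mathrm{Int}_{v^*\mathfrak{F}}(G)$. These relations collapse to $\mathrm{Int}_{\overline{w}\mathfrak{F}}(G)=S_\mathfrak{F}(G)$ and $\mathrm{Int}_{v^*\mathfrak{F}}(G)=C_\mathfrak{F}(G)$, while the middle inequality $S_\mathfrak{F}(G)\leq C_\mathfrak{F}(G)$ follows from $\overline{w}\mathfrak{F}\subseteq v^*\mathfrak{F}$ of Proposition \ref{wv}, since that inclusion makes every $\overline{w}\mathfrak{F}$-central chief factor $v^*\mathfrak{F}$-central, whence $\mathrm{Z}_{\overline{w}\mathfrak{F}}(G)\leq\mathrm{Z}_{v^*\mathfrak{F}}(G)$.
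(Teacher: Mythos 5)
Your overall decomposition has a fatal strategic flaw: you route both identifications through the hypercenters, via the intermediate claim that $\mathrm{Int}_\mathfrak{X}(G)=\mathrm{Z}_\mathfrak{X}(G)$ for every hereditary $Z$-saturated formation $\mathfrak{X}$. That claim is false. Corollary \ref{l5.1} gives only $\mathrm{Z}_\mathfrak{X}(G)\leq\mathrm{Int}_\mathfrak{X}(G)$; the reverse inclusion is exactly the Shemetkov--Skiba problem discussed in Section 4, and by \cite{h4} (see also \cite{GUO2012}) it fails already for the hereditary saturated formation of supersoluble groups. Indeed, the whole point of Theorems \ref{gb} and \ref{Thm2} is that for $\mathfrak{X}=\overline{w}\mathfrak{F}$ or $v^*\mathfrak{F}$ the identity $\mathrm{Int}_\mathfrak{X}(G)=\mathrm{Z}_\mathfrak{X}(G)$ \emph{characterizes} the $\sigma$-nilpotent case, so it cannot be assumed in a proposition that must hold for arbitrary hereditary $\mathfrak{F}$. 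Concretely, your argument establishes $\mathrm{Z}_{\overline{w}\mathfrak{F}}(G)\leq S_\mathfrak{F}(G)\leq\mathrm{Int}_{\overline{w}\mathfrak{F}}(G)$, which does not collapse to the required equality, and your derivation of the middle inequality from $\mathrm{Z}_{\overline{w}\mathfrak{F}}(G)\leq\mathrm{Z}_{v^*\mathfrak{F}}(G)$ evaporates for the same reason. The paper avoids the hypercenter entirely here: for the inclusion $\mathrm{Int}_{\overline{w}\mathfrak{F}}(G)\leq S_\mathfrak{F}(G)$ one only needs that a Sylow subgroup $P$ lies in some $\overline{w}\mathfrak{F}$-maximal subgroup $U$, which contains $\mathrm{Int}_{\overline{w}\mathfrak{F}}(G)$, so $P\,\mathrm{Int}_{\overline{w}\mathfrak{F}}(G)\leq U$ lies in $\overline{w}\mathfrak{F}$ by heredity, whence $P$ is $K$-$\mathfrak{F}$-subnormal in $P\,\mathrm{Int}_{\overline{w}\mathfrak{F}}(G)$ and Proposition \ref{p1} applies; similarly $S_\mathfrak{F}(G)\leq C_\mathfrak{F}(G)$ follows directly from the fact that a cyclic primary subgroup is subnormal in a Sylow subgroup, using Lemmas \ref{l3.1} and \ref{l3.2} and Proposition \ref{p1}. (Your converse direction, showing $MS_\mathfrak{F}(G)\in\overline{w}\mathfrak{F}$ for an $\overline{w}\mathfrak{F}$-maximal $M$ via Dedekind's identity, is sound and matches the paper; your extra care with Sylow subgroups of $MS_\mathfrak{F}(G)$ versus Sylow subgroups of $G$ is actually a point the paper glosses over.)

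The $Z$-saturation step is also only a gesture. Inducting on a minimal normal subgroup $N$ leaves you with precisely the problem to be solved --- passing from ``$N$ is $\mathfrak{X}$-central and $G/N\in\mathfrak{X}$'' to ``$G\in\mathfrak{X}$'' --- and the $N$-critical graph of Proposition \ref{5.4} does not control which Sylow or cyclic primary subgroups are $K$-$\mathfrak{F}$-subnormal for a general hereditary $\mathfrak{F}$; that kind of Schmidt-subgroup control is again special to the $\sigma$-nilpotent situation. The paper's argument uses a different and essential idea: a minimal order group $G$ in $Z\overline{w}\mathfrak{F}\setminus\overline{w}\mathfrak{F}$ is $\overline{w}\mathfrak{F}$-critical with $|\pi(G)|>1$, and the inclusion $Z\overline{w}\mathfrak{F}\subseteq\textbf{E}_\Phi\overline{w}\mathfrak{F}$ from \cite{bb1} forces $\Phi(G)\neq 1$ and $G/\Phi(G)\in\overline{w}\mathfrak{F}$; then $P\Phi(G)<G$ lies in $\overline{w}\mathfrak{F}$ by criticality, and Lemma \ref{l3.1} splices the $K$-$\mathfrak{F}$-subnormality of $P$ in $P\Phi(G)$ with that of $P\Phi(G)$ in $G$ to reach the contradiction $G\in\overline{w}\mathfrak{F}$. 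Without some replacement for this Frattini argument, your induction does not close.
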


\begin{proof} Note that $v^*\mathfrak{F}$ and $\overline{w}\mathfrak{F}$ are  hereditary formations by Proposition \ref{wv}.
Assume that   $\overline{w}\mathfrak{F}$ is not a   $Z$-saturated formation. Let chose a minimal order group   $G$ from  $Z(\overline{w}\mathfrak{F})\setminus \overline{w}\mathfrak{F}$. From Proposition \ref{l5} it follows that   $Z\overline{w}\mathfrak{F}$ is a hereditary formation. So  $G$ is $\overline{w}\mathfrak{F}$-critical. Now $|\pi(G)|>1$ by    Proposition \ref{wv}. From  $\overline{w}\mathfrak{F}\subset Z\overline{w}\mathfrak{F}\subseteq \textbf{E}_\Phi \overline{w}\mathfrak{F}$ it follows that   $\Phi(G)\neq 1$ and $G/\Phi(G)\in \overline{w}\mathfrak{F}$. Let   $P$ be a Sylow subgroup of $G$. Then $P\Phi(G)< G$ and $P\Phi(G)\in \overline{w}\mathfrak{F}$. Hence $P$ $K$-$\mathfrak{F}$-$sn\,P\Phi(G)$. From $G/\Phi(G)\in \overline{w}\mathfrak{F}$ it follows that $P\Phi(G)/\Phi(G)$ $K$-$\mathfrak{F}$-$sn\,G/\Phi(G)$. Therefore $P\Phi(G)$ $K$-$\mathfrak{F}$-$sn\,G$. Thus $P$ $K$-$\mathfrak{F}$-$sn\,G$. It means that  $G\in \overline{w}\mathfrak{F}$, a contradiction. Thus  $\overline{w}\mathfrak{F}$ is a  $Z$-saturated formation. The proof for $v^*\mathfrak{F}$ is the same.

Note that   $\mathfrak{N}\subseteq v^*\mathfrak{F}$  by Proposition~\ref{wv}. Hence $C\mathrm{Int}_{v^*\mathfrak{F}}(G)\in v^*\mathfrak{F}$ for every cyclic primary subgroup   $C$ of $G$.  Therefore $C$ $K$-$\mathfrak{F}$-$sn\,C\mathrm{Int}_{v^*\mathfrak{F}}(G)$  for every cyclic primary subgroup   $C$ of $G$.  Thus $\mathrm{Int}_{v^*\mathfrak{F}}(G)\leq C_\mathfrak{F}(G)$  by $(2)$ of Proposition~\ref{p1}.

From the other hand let  $M$ be  a $v^*\mathfrak{F}$-maximal subgroup of  $G$ and $C$ be a cyclic primary subgroup of  $MC_\mathfrak{F}(G)$.
Since $MC_\mathfrak{F}(G)/C_\mathfrak{F}(G)\in v^*\mathfrak{F}$, we see that $C_\mathfrak{F}(G)C/C_\mathfrak{F}(G)$    $K$-$\mathfrak{F}$-$sn\,MC_\mathfrak{F}(G)/C_\mathfrak{F}(G)$. Hence $C_\mathfrak{F}(G)C$    $K$-$\mathfrak{F}$-$sn\,MC_\mathfrak{F}(G)$ by $(2)$  of Lemma  \ref{l3.1}. Note that   $C$   $K$-$\mathfrak{F}$-$sn\,C_\mathfrak{F}(G)C$ by Proposition  \ref{p1}. So $C$   $K$-$\mathfrak{F}$-$sn\,MC_\mathfrak{F}(G)$ by $(3)$ of Lemma \ref{l3.1}. Thus $MC_\mathfrak{F}(G)\in v^*\mathfrak{F}$ by the definition of  $v^*\mathfrak{F}$. Hence $MC_\mathfrak{F}(G)=M$. Therefore $C_\mathfrak{F}(G)\leq\mathrm{Int}_{v^*\mathfrak{F}}(G)$. Thus $\mathrm{Int}_{v^*\mathfrak{F}}(G)= C_\mathfrak{F}(G)$.
The proof of that equality  $\mathrm{Int}_{\overline{w}\mathfrak{F}}(G)= S_\mathfrak{F}(G)$ holds in every group is the same.

Since every cyclic primary subgroup is subnormal in some Sylow subgroup, we see that  $P$ $K$-$\mathfrak{F}$-$sn\,PS_\mathfrak{F}(G)$ for every cyclic primary subgroup $P$ of $G$. So $S_\mathfrak{F}(G)\leq C_\mathfrak{F}(G)$ holds for every group $G$ by Proposition~\ref{p1}.
\end{proof}

\begin{proof}[Proof of Theorem \ref{gb}]
$(1)\Rightarrow(2)$. Since $\mathfrak{F}\subseteq \overline{w}\mathfrak{F}$ by Proposition \ref{wv}, we see that $\mathrm{Z}_\mathfrak{F}(G)\leq\mathrm{Z}_{\overline{w}\mathfrak{F}}(G)$ for every group $G$. Note that $\mathrm{Z}_{\overline{w}\mathfrak{F}}(G)\leq\mathrm{Int}_{\overline{w}\mathfrak{F}}(G)$  for every group $G$ by $(3)$ of Proposition  \ref{l5} and Proposition \ref{p2}. According to Proposition \ref{p2},   $S_\mathfrak{F}(G)=\mathrm{Int}_{\overline{w}\mathfrak{F}}(G)$ and
  $S_\mathfrak{F}(G)\leq C_\mathfrak{F}(G)$ for every group $G$. From these and   $(1)$ it follows that
  $$\mathrm{Z}_\mathfrak{F}(G)\leq\mathrm{Z}_{\overline{w}\mathfrak{F}}(G)\leq\mathrm{Int}_{\overline{w}\mathfrak{F}}(G)
  =S_\mathfrak{F}(G)\leq C_\mathfrak{F}(G)=\mathrm{Z}_\mathfrak{F}(G)$$
 for every group $G$.  Thus $\mathrm{Z}_\mathfrak{F}(G) =S_\mathfrak{F}(G)$ for every group $G$.

$(2)\Rightarrow(3)$. The proof consists of the following steps:

$(a)$ \emph{We may assume that $\mathfrak{N}\subseteq\mathfrak{F}$ is $Z$-saturated}.

According to Proposition \ref{p9} Statements $(2)$ and $(3)$   mean the same for   $\mathfrak{F}$ and $Z\mathfrak{F}$. Note that $Z\mathfrak{F}=Z(Z\mathfrak{F})$ by Proposition \ref{p9}. \textbf{\emph{Therefore without lose of generality we may assume that  $\mathfrak{F}$ is $Z$-saturated in the proof of $(2)\Rightarrow(3)$}}. Since in every nilpotent group every Sylow subgroup is subnormal and $Z\mathfrak{F}=\mathfrak{F}$ we see that   $\pi(\mathfrak{F})=\mathbb{P}$ and $\mathfrak{N}\subseteq\mathfrak{F}$.

 $(b)$ \emph{Assume that a group $G$ has faithful irreducible module $L$  over $\mathbb{F}_p$,    $T=L\rtimes G$  and $L\leq S_\mathfrak{F}(T)$. Then $G\in\mathfrak{F}$.}

Note that $L\leq S_\mathfrak{F}(G)=\mathrm{Z}_{\mathfrak{F}}(T)$. Hence $L\rtimes (T/C_T(L))\in\mathfrak{F}$. Thus $G\simeq T/C_T(L)\in\mathfrak{F}$, the contradiction.

$(c)$ \emph{Let $\pi(p)=\{q\in\mathbb{P}\,|\,(p, q)\in\Gamma_{Nc}(\mathfrak{F})\}\cup\{p\}$. Then $\mathfrak{F}$ contains every $q$-closed $\{p, q\}$-group for every  $q\in\pi(p)$.}

Assume the contrary. Let $G$ be a minimal order counterexample. Since $\mathfrak{F}$ and the class of all $q$-closed groups are hereditary formations, we see that  $G$ is an $\mathfrak{F}$-critical group, $G$  has a unique minimal normal subgroup $N$ and $G/N\in\mathfrak{F}$. Let $P$ be a Sylow $p$-subgroup of $G$.   If $NP<G$, then $NP\in\mathfrak{F}$. Hence $P$ $K$-$\mathfrak{F}$-$sn\,PN$  and $PN/N$ $K$-$\mathfrak{F}$-$sn\,G/N$. From Lemma \ref{l3.1} it follows that $P$ $K$-$\mathfrak{F}$-$sn\,G$. Since $G$  is a $q$-closed $\{p, q\}$-group, we see that every Sylow subgroup of $G$ is $K$-$\mathfrak{F}$-subnormal. So $G\in Z\mathfrak{F}=\mathfrak{F}$, a contradiction.

Now $N$ is a Sylow $q$-subgroup  and $\mathrm{O}_p(G)=1$. By Lemma \ref{10.3B} $G$ has a faithful irreducible  module $L$ over $\mathbb{F}_p$. Let $T=L\rtimes G$.  Therefore for every chief factor $H/K$ of $NL$  a group    $(H/K)\rtimes C_{NL}(H/K)$ is isomorphic to one of the following groups  $Z_p, Z_q$ and a Schmidt $(p, q)$-group with the trivial Frattini subgroup. Note that all these groups belong $\mathfrak{F}$. So  $NL\in Z\mathfrak{F}=\mathfrak{F}$. Note that $L\leq \mathrm{O}_p(T)$. Hence  $L\leq S_\mathfrak{F}(T)$ by Proposition \ref{p1}.
Thus $G\in\mathfrak{F}$ by $(b)$, a contradiction.

From $(c)$ it follows that

 $(d)$ \emph{$\Gamma_{Nc}(\mathfrak{F})$ is undirected, i.e $(p, q)\in\Gamma_{Nc}(\mathfrak{F})$ iff $(q, p)\in\Gamma_{Nc}(\mathfrak{F})$.}

$(e)$ \emph{Let   $p, q$ and $r$ be different primes.  If $(p, r), (q, r)\in\Gamma_{Nc}(\mathfrak{F})$, then   $(p, q)\in\Gamma_{Nc}(\mathfrak{F})$}.

Note that the cyclic group $Z_q$ of order $q$ has a faithful irreducible    module $P$ over $\mathbb{F}_p$ by Lemma  \ref{10.3B}. Let $G=P\rtimes Z_q$.  Then $G$ has a faithful irreducible     module $R$ over $\mathbb{F}_r$ by Lemma  \ref{10.3B}. Let $T=R\rtimes G$. From $(c)$ it follows that   $\mathfrak{F}$-contains  all $r$-closed $\{p, r\}$-groups and $\{q, r\}$-groups. Hence $R\leq S_\mathfrak{F}(T)$  by Proposition~\ref{p1}. Thus $G\in\mathfrak{F}$ by $(b)$. Note that $G$  is a Schmidt $(p, q)$-group. It means that $(p, q)\in\Gamma_{Nc}(\mathfrak{F})$ by the definition of $N$-critical graph.

$(f)$   \emph{$\mathfrak{F}=\mathfrak{N}_\sigma$ for some partition $\sigma$ of $\mathbb{P}$.  }

From $(d)$ and $(e)$ it follows that $\Gamma_{Nc}(\mathfrak{F})$ is a disjoint union of  complete (directed) graphs $\Gamma_i$, $i\in I$. Let $\pi_i=V(\Gamma_i)$. Then $\sigma=\{\pi_i\,|\,i\in I\}$ is a partition of $\mathbb{P}$. From Proposition \ref{5.4} it follows that every $\mathfrak{F}$-group $G$ has normal Hall $\pi_i$-subgroups for every $i\in I$ with $\pi_i\cap\pi(G)\neq\emptyset$. So $G$ is $\sigma$-nilpotent. Hence        $\mathfrak{F}\subseteq\mathfrak{N}_\sigma$.

Let show that the class
$\mathfrak{G}_{\pi_i}$ of all $\pi_i$-groups is a subset of  $\mathfrak{F}$ for every $i\in I$.
It is true  if $|\pi_i|=1$. Assume now $|\pi_i|>1$.  Suppose the contrary and let a group  $G$ be a minimal order group from $\mathfrak{G}_{\pi_i}\setminus\mathfrak{F}$. Then $G$ has a unique minimal normal subgroup, $\pi(G)\subseteq\pi_i$ and $|\pi(G)|>1$. Note that $\mathrm{O}_q(G)=1$ for some $q\in\pi(G)$. Hence $G$ has  a faithful irreducible     module $N$ over $\mathbb{F}_q$ by Lemma \ref{10.3B}. Let $T=N\rtimes G$. Hence $NP\in\mathfrak{F}$ for every Sylow subgroup $P$ of  $T$ by $(c)$. Now $N\leq S_\mathfrak{F}(T)$ by Proposition \ref{p1}.
So $G\in\mathfrak{F}$ by $(b)$, the contradiction.

Since a formation is closed under taking direct products, we see that $\mathfrak{N}_\sigma\subseteq\mathfrak{F}$.
Thus $\mathfrak{F}=\mathfrak{N}_\sigma$.

$(3)\Rightarrow(1)$. Recall that the class of all $\sigma$-nilpotent groups is saturated. Hence it is $Z$-saturated.
According to Proposition \ref{p9} Statements $(3)$ and $(1)$   mean the same for   $\mathfrak{F}$ and $Z\mathfrak{F}$. Hence we may assume that $\mathfrak{F}=\mathfrak{N}_\sigma$ for some partition $\sigma=\{\pi_i\mid i\in I\}$ of $\mathbb{P}$.
Then $\mathfrak{\mathfrak{N}_\sigma}$ has the lattice property for $K$-$\mathfrak{F}$-subnormal subgroups  (see \cite[Lemma 2.6(3)]{sp4} or \cite[Chapter 3]{s9}).
According to \cite[Theorem B and Corollary E.2]{vF} $v^*\mathfrak{F}=\mathfrak{F}$. By \cite[Theorem A and Proposition 4.2]{h4}    $\mathrm{Int}_\mathfrak{F}(G)=\mathrm{Z}_\mathfrak{F}(G)$ holds for every group $G$. By  Proposition \ref{p2}, $C_\mathfrak{F}(G)=\mathrm{Int}_{v^*\mathfrak{F}}(G)$ for every group $G$. Thus
$C_\mathfrak{F}(G)=\mathrm{Int}_{v^*\mathfrak{F}}(G)=\mathrm{Int}_{\mathfrak{F}}(G)=\mathrm{Z}_\mathfrak{F}(G) $ for every group $G$.

$(3)\Rightarrow (4)$ Statement $(3)$ means that $Z\mathfrak{F}=\mathfrak{N}_\sigma$ and  $\pi(\mathfrak{F})=\pi(Z\mathfrak{F})=\mathbb{P}$. From $ \mathfrak{F}\subseteq Z\mathfrak{F}$ it follows that $ \mathfrak{F}=\times_{i\in I} \mathfrak{F}_{\pi_i}$  where $\mathfrak{F}_{\pi_i}$ is a hereditary formation with $\pi(\mathfrak{F}_{\pi_i})=\pi_i$.

Assume that $\pi_i\in\sigma$ and $ |\pi_i|\geq 2$.
Let choose a minimal order $\pi_i$-group $G$ from $Z\mathfrak{F}\setminus\mathfrak{F}_{\pi_i}$. Since $Z\mathfrak{F}=\mathfrak{N}_\sigma$ and $\mathfrak{F}_{\pi_i}=\mathfrak{F}\cap\mathfrak{G}_{\pi_i}$ are   formations, we see that  $G$ has a unique minimal normal subgroup $N$. From $|\pi_i|\geq 2$ it follows that there exists $p\in{\pi_i}$ such that $N$ is not a $p$-group.
 Therefore $G$ has  a faithful irreducible   module $V$ over $\mathbb{F}_p$ by Lemma  \ref{10.3B}. Let $T=V\rtimes G$. Since $T$ is a $\pi_i$-group, $ T\in  \mathfrak{N}_\sigma= Z\mathfrak{F}$. Hence $R=V\rtimes (T/C_T(V))\in \mathfrak{F} \cap\mathfrak{G}_{\pi_i}=\mathfrak{F}_{\pi_i}$ and $T/C_T(V)\simeq G$. Now $G\in\mathfrak{F}_{\pi_i}$ as a quotient group of $R$, a contradiction. It means that $\mathfrak{F}\cap \mathfrak{G}_{\pi_i}=Z\mathfrak{F}\cap \mathfrak{G}_{\pi_i}=\mathfrak{G}_{\pi_i}$.

$(4)\Rightarrow (3)$ Assume that $ \mathrm{Z}_\mathfrak{F}(G)\neq\mathrm{Z}_{\mathfrak{N}_\sigma}(G)$ for some group $G$. It means that there exists a primitive $\mathfrak{N}_\sigma$-group $H$ with $H\not\in\mathfrak{F}$. Since $H$ is       a primitive $\mathfrak{N}_\sigma$-group, we see that $H$ is a $\pi_i$-group for some $i\in I$. If $|\pi_i|\geq 2$, then $ H\in\mathfrak{G}_{\pi_i}\subseteq\mathfrak{F}$, a contradiction. Hence $|\pi_i|=1$. So $H$ is a $p$-group for some $p\in\mathbb{P}$. Therefore $H$ is a cyclic group of order $p$. Thus $H\in\mathfrak{F}$, the final contradiction.
 \end{proof}

\begin{proof}[Proof of Corollary \ref{gs}]
  Let $ D$ be the intersection of normalizers in $G$ of all subgroups from $\mathcal{M}$. From $(a)$ it follows that $D\trianglelefteq G$.   Let $P$ be a Sylow subgroup of $G$ and $H$ be a subgroup from $\mathcal{M}$ with $P\leq H$. Note that $H\in \mathfrak{N}_\sigma$. Now $P$ $K$-$\mathfrak{N}_\sigma$-$sn\,H\trianglelefteq HD$. So $P$ $K$-$\mathfrak{N}_\sigma$-$sn\, HD$. Hence    $P$ $K$-$\mathfrak{N}_\sigma$-$sn\, PD$ by Lemma \ref{l3.2}. It means that $D$ $K$-$\mathfrak{N}_\sigma$-subnormalizes all Sylow subgroups of $G$. Thus $ D\leq S_{\mathfrak{N}_\sigma}(G)$ by Proposition \ref{p1}.

  From the proof of Theorem \ref{gb} it follows that $S_{\mathfrak{N}_\sigma}(G)=\mathrm{Z}_{\mathfrak{N}_\sigma}(G)=\mathrm{Int}_{\mathfrak{N}_\sigma}(G)$. Let $H\in\mathcal{M}$. Now $HS_{\mathfrak{N}_\sigma}(G)\in \mathfrak{N}_\sigma$. Since $H$ is a $\pi_i$-maximal subgroup of $G$, $H$ is a $\pi_i$-maximal subgroup of $HS_{\mathfrak{N}_\sigma}(G)$. It means that $H\trianglelefteq HS_{\mathfrak{N}_\sigma}(G)$. So $S_{\mathfrak{N}_\sigma}(G)$ normalizes all subgroups from $\mathcal{M}$.  Hence $ S_{\mathfrak{N}_\sigma}(G)\leq D$. Thus $D=S_{\mathfrak{N}_\sigma}(G)=\mathrm{Z}_{\mathfrak{N}_\sigma}(G)$ by Theorem \ref{gb}.
\end{proof}

\begin{proof}[Proof of Corollary \ref{cor15}]
From Corollary \ref{corollary2} it follows that every  $\pi_i$-element of $\mathrm{Z}_{\mathfrak{N}_\sigma}(G)$ permutes with every $\pi_i'$-element of $ G$.

  Let $A$ be the set of all $\pi_i$-elements of  $G$ that permute   with all $\pi_i'$-elements of $ G$ and $H=\langle A\rangle$. So all elements of $H$ permute   with all $\pi_i'$-elements of $ G$. Since $\mathrm{O}^{\pi_i}(H)$ is generated by all $\pi_i'$-elements of $H$, we see that $\mathrm{O}^{\pi_i}(H)\leq \mathrm{Z}(H)$. Hence all $\pi_i'$-elements of  $\mathrm{O}^{\pi_i}(H)$ form a subgroup. So $ \mathrm{O}^{\pi_i}(H)$ is a $\pi_i'$-group. Let $ K$ be a $ \mathfrak{G}_{\pi_i}$-projector of $ H$. Then $ K\mathrm{O}^{\pi_i}(H)=H$. So $ K\trianglelefteq H$. It means that $\pi_i$-elements of $ H$ form a subgroup. Thus $ H=A$.

  Now $ A$ is a normal $ \pi_i$-subgroup of $G$. Hence it lies in every $ \pi_i$-maximal subgroup of $G$. Note that $A$ lies in the normalizer of every $\pi_i'$-subgroup  by its definition. Thus $A\leq\mathrm{Z}_{\mathfrak{N}_\sigma}(G)$ by Corollary \ref{corollary2}.
\end{proof}

\section{Applications}

 R. Baer \cite{Baer1953} proved that the hypercenter of a group coincides with the intersection of all its maximal nilpotent subgroups.
 L.\,A. Shemetkov possed a question at the  Gomel Algebraic Seminar in 1995 that can be formulated in the following way:
 For what non-empty  (normally) hereditary (solubly)
 saturated formations $\mathfrak{F}$ does the intersection of all $\mathfrak{F}$-maximal subgroups coincides with the $ \mathfrak{F}$-hypercenter in every group?
 A.\,N. Skiba    \cite{h4} answered on this question for hereditary saturated formations $\mathfrak{F}$ (for the soluble case, see also J.\,C. Beidleman  and H. Heineken \cite{h3}).
From Theorem \ref{gb} follows a solution of this question for a family of hereditary not
necessary saturated formations.

\begin{theorem}\label{Thm2}
  Let $\mathfrak{F}$ be a hereditary formation.

  \begin{enumerate}[$(1)$]

   \item $\mathfrak{F}=\overline{w}\mathfrak{F}$ if and only if $S_\mathfrak{F}(G)=\mathrm{Int}_\mathfrak{F}(G)$ holds for every group.

  \item $\mathfrak{F}=v^*\mathfrak{F}$ if and only if $C_\mathfrak{F}(G)=\mathrm{Int}_\mathfrak{F}(G)$ holds for every group.

  \item
  Assume that $\mathfrak{F}=\overline{w}\mathfrak{F}$   or $ \mathfrak{F}=v^*\mathfrak{F}$. Then $\mathrm{Z}_\mathfrak{F}(G)=\mathrm{Int}_\mathfrak{F}(G)$ holds for every group if and only if there is a partition   $\sigma$ of $\mathbb{P}$ such that $\mathfrak{F}$ is the class of all  $\sigma$-nilpotent groups.
  \end{enumerate}
\end{theorem}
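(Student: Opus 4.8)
The plan is to reduce everything to Proposition~\ref{p2} and Theorem~\ref{gb}, which already carry the structural weight, and to add only one elementary observation. For part~$(1)$, Proposition~\ref{p2} supplies the identity $S_\mathfrak{F}(G)=\mathrm{Int}_{\overline{w}\mathfrak{F}}(G)$ in every group, so the forward implication is immediate: if $\mathfrak{F}=\overline{w}\mathfrak{F}$, then $S_\mathfrak{F}(G)=\mathrm{Int}_{\overline{w}\mathfrak{F}}(G)=\mathrm{Int}_\mathfrak{F}(G)$. For the converse I would invoke the elementary fact that $\mathrm{Int}_\mathfrak{X}(G)=G$ forces $G\in\mathfrak{X}$: the intersection of all $\mathfrak{X}$-maximal subgroups is contained in each of them, so if it equals $G$ then every $\mathfrak{X}$-maximal subgroup equals $G$, whence $G\in\mathfrak{X}$. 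Now, assuming $S_\mathfrak{F}(G)=\mathrm{Int}_\mathfrak{F}(G)$ for every group and recalling $\mathfrak{F}\subseteq\overline{w}\mathfrak{F}$ from Proposition~\ref{wv}, it remains to prove $\overline{w}\mathfrak{F}\subseteq\mathfrak{F}$. Taking $G\in\overline{w}\mathfrak{F}$ gives $\mathrm{Int}_{\overline{w}\mathfrak{F}}(G)=G$, hence $\mathrm{Int}_\mathfrak{F}(G)=S_\mathfrak{F}(G)=\mathrm{Int}_{\overline{w}\mathfrak{F}}(G)=G$ by Proposition~\ref{p2} and the hypothesis, and the observation yields $G\in\mathfrak{F}$. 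Part~$(2)$ runs verbatim with $C_\mathfrak{F}(G)=\mathrm{Int}_{v^*\mathfrak{F}}(G)$ and $v^*\mathfrak{F}$ in place of $\overline{w}\mathfrak{F}$.

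For part~$(3)$ I would split according to which of the two hypotheses holds. If $\mathfrak{F}=\overline{w}\mathfrak{F}$, then part~$(1)$ gives $\mathrm{Int}_\mathfrak{F}(G)=S_\mathfrak{F}(G)$ in every group, so the condition $\mathrm{Z}_\mathfrak{F}(G)=\mathrm{Int}_\mathfrak{F}(G)$ becomes $\mathrm{Z}_\mathfrak{F}(G)=S_\mathfrak{F}(G)$, which is exactly statement~$(2)$ of Theorem~\ref{gb}; that theorem equates it with the existence of a partition $\sigma$ of $\mathbb{P}$ with $\mathfrak{F}=\mathfrak{N}_\sigma$. Symmetrically, if $\mathfrak{F}=v^*\mathfrak{F}$, then part~$(2)$ turns the condition into $\mathrm{Z}_\mathfrak{F}(G)=C_\mathfrak{F}(G)$, which is statement~$(1)$ of Theorem~\ref{gb}, again equivalent to $\mathfrak{F}=\mathfrak{N}_\sigma$. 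Because Theorem~\ref{gb} is a genuine equivalence, this settles both implications of part~$(3)$ simultaneously. I would also note, for consistency, that the hypothesis is automatically met by the conclusion: when $\mathfrak{F}=\mathfrak{N}_\sigma$ one has $v^*\mathfrak{F}=\mathfrak{F}$, and then $\overline{w}\mathfrak{F}=\mathfrak{F}$ as well via the sandwich $\mathfrak{F}\subseteq\overline{w}\mathfrak{F}\subseteq v^*\mathfrak{F}$ of Proposition~\ref{wv}.

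I do not expect a serious obstacle, since the hard analysis has been front-loaded into Proposition~\ref{p2} and Theorem~\ref{gb}. The only point that genuinely requires care is the converse of parts~$(1)$ and~$(2)$: one must argue that matching the two $\mathrm{Int}$-invariants on \emph{every} group forces the underlying classes to coincide, and the cleanest route is precisely the remark that $\mathrm{Int}_\mathfrak{X}(G)=G$ characterizes the membership $G\in\mathfrak{X}$. Everything else is bookkeeping that transports the identities of Proposition~\ref{p2} through the equivalence of Theorem~\ref{gb}.
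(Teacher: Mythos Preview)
Your proposal is correct and follows essentially the same route as the paper: reduce parts $(1)$ and $(2)$ to the identities $S_\mathfrak{F}(G)=\mathrm{Int}_{\overline{w}\mathfrak{F}}(G)$ and $C_\mathfrak{F}(G)=\mathrm{Int}_{v^*\mathfrak{F}}(G)$ from Proposition~\ref{p2}, and then feed the resulting equalities into Theorem~\ref{gb} for part $(3)$. The paper's proof of the converse in $(1)$ simply asserts that $\mathrm{Int}_{\mathfrak{F}}(G)=\mathrm{Int}_{\overline{w}\mathfrak{F}}(G)$ for every $G$ forces $\mathfrak{F}=\overline{w}\mathfrak{F}$, without spelling out why; your argument via the observation that $\mathrm{Int}_\mathfrak{X}(G)=G$ characterizes $G\in\mathfrak{X}$ is exactly the missing justification, so your write-up is in fact slightly more complete than the paper's at that point.
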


\begin{proof}
From Proposition \ref{p2} it follows that  $S_\mathfrak{F}(G)=\mathrm{Int}_{\overline{w}\mathfrak{F}}(G)$. Now $(1)$ follows from the fact that  $\mathrm{Int}_{\mathfrak{F}}(G)= \mathrm{Int}_{\overline{w}\mathfrak{F}}(G)$ holds for every group if and only if $\mathfrak{F}=\overline{w}\mathfrak{F}$. The proof of $(2)$ is the same.

$(3)$   Assume that $\mathfrak{F}=\overline{w}\mathfrak{F}$. Now $\mathfrak{F}$ is $Z$-saturated by Proposition \ref{p2} and $ \mathrm{Int}_\mathfrak{F}(G)=\mathrm{Int}_{\overline{w}\mathfrak{F}}(G)$ holds for every group $G$. From Proposition \ref{p2} it follows that $ S_\mathfrak{F}(G)=\mathrm{Int}_{\overline{w}\mathfrak{F}}(G)$ holds for every group $G$. Now $ \mathrm{Int}_\mathfrak{F}(G)=\mathrm{Z}_{\mathfrak{F}}(G)$ holds for every group if and only if $ S_\mathfrak{F}(G)=\mathrm{Z}_{\mathfrak{F}}(G)$ holds for every group $G$.  From $(3)$ of Theorem \ref{gb} it follows that the last equality holds for every group if and only if there is a partition $ \sigma$ of $\mathbb{P}$ such that $Z\mathfrak{F}=\mathfrak{N}_\sigma$. Hence $\mathfrak{F}=\mathfrak{N}_\sigma$.   From this theorem is also follows that $\mathfrak{N}_\sigma=\overline{w}\mathfrak{N}_\sigma$.

  The proof of $(3)$  for   $\mathfrak{F}=v^*\mathfrak{F}$ is the same.
\end{proof}

\begin{remark}
  There is a rather important family of not necessary saturated hereditary formations $\mathfrak{F}$ with $v^*\mathfrak{F}=\mathfrak{F}$ and  $\overline{w}\mathfrak{F}=\mathfrak{F}$. Recall that a formation $ \mathfrak{F}$ has the Shemetkov property if every $\mathfrak{F}$-critical group is either a Schmidt group of a cyclic group of prime order. The family of hereditary formations with the Shemetkov property   contains non-saturated formations (see  \cite[Chapter 6.4]{s9}).
   For example let $\mathfrak{F}$ be a class of groups all whose Schmidt subgroups are Schmidt $(p, q)$-groups for $(p, q)\in\{(2,3), (3,2), (5, 2)\}$. Then $ \mathfrak{F}$ has the Shemetkov property by \cite[Theorem 3.5]{VM} and $\pi(\mathfrak{F})=\mathbb{P}$. Let $G$ be the alternating group of degree 5. Hence $G\in\mathfrak{F}$.  According to \cite{Griess1978} there is a Frattini $\mathbb{F}_3G$-module $T$ which is faithful for $G$. By the  Gasch\"{u}tz theorem (see \cite[Appendix $\beta$]{s8}), there
exists a Frattini extension  $T\rightarrowtail R\twoheadrightarrow G$
such that $T\stackrel {G}{\simeq} \Phi(R)$ and $R/\Phi(R)\simeq G$.
 Let $ K/\Phi(R)$ be a cyclic subgroup of $G/\Phi(G)$ of order 5. Since $ T$ is faithful for $ G$, we see that $ K$ is a non-nilpotent group with a normal Sylow $ 3$-subgroup. Hence its contains a Schmidt $(5, 3)$-subgroup. It means that $ G\not\in\mathfrak{F}$, i.e. $ \mathfrak{F}$ is not saturated.

   As follows from  \cite{vF,wF} and \cite[Corollaries 3.9 and 3.10]{Murashka2018} $v^*\mathfrak{F}=\mathfrak{F}$ and  $\overline{w}\mathfrak{F}=\mathfrak{F}$ for every hereditary formation $\mathfrak{F}$ with the Shemetkov property and $\pi(\mathfrak{F})=\mathbb{P}$.\end{remark}

Let give another application of Theorem \ref{gb}.
Recall that a formation $\mathfrak{F}$ is called regular \cite{Nemmi2020}, if for every group $G$ holds $$\mathcal{I}_\mathfrak{F}(G)=\{x\in G\mid \langle x, y\rangle\in\mathfrak{F}\,\,\forall y\in G\}=\mathrm{Int}_\mathfrak{F}(G).$$
The regular formations of soluble groups were studied in \cite{Nemmi2020}. Here we give examples of such formations of non-necessary soluble groups.

Recall (see \cite{Nemmi2020}) that the non-$\mathfrak{F}$-graph  $\Gamma_\mathfrak{F}(G)$ of a group $G$ is the graph whose vertex set is $G\setminus\mathcal{I}_\mathfrak{F}(G)$ and two vertices $x$ and $y$ are connected if $\langle x, y\rangle\not\in\mathfrak{F}$. This type of graphs can be traced back to P. Erd\H{o}s who considered non-commuting (non-abelian) graph.  A. Abdollahi and M. Zarrin  \cite{Abdollahi2010}  asked to find the  bounds for diameters of non-nilpotent graphs. The final answer on this question was obtained by A. Lucchini  and D. Nemmi \cite{Lucchini2020}.

\begin{theorem}
The formation of all $\sigma$-nilpotent groups is regular and  $\mathcal{I}_{\mathfrak{N}_\sigma}(G)=\mathrm{Z}_{\mathfrak{N}_\sigma}(G)$ holds for every group $G$. Moreover the graph $\Gamma_{\mathfrak{N}_\sigma}(G)$ is connected and  $\mathrm{diam}(\Gamma_{\mathfrak{N}_\sigma}(G))\leq 3$ for every group $G$.
\end{theorem}

\begin{proof} Let $x\in G$. Denote by $G_p$ and $x_p$ a Sylow $p$-subgroup of $G$ and    $x^{|G|/|G_p|}$ respectively. Note that if $\langle x_p, y\rangle\not\in\mathfrak{N}_\sigma$, then $\langle x, y\rangle\not\in\mathfrak{N}_\sigma$.

$(1)$ \emph{$\mathfrak{N}_\sigma$ is regular and  $\mathcal{I}_{\mathfrak{N}_\sigma}(G)=\mathrm{Z}_{\mathfrak{N}_\sigma}(G)$ holds for every group $G$}

  Let $y\in G$. Then $\langle y\rangle\in\mathfrak{N}_\sigma$. It means that $\langle y\rangle\mathrm{Z}_{\mathfrak{N}_\sigma}(G)\in\mathfrak{N}_\sigma$. Hence $\langle x, y\rangle\in\mathfrak{N}_\sigma$ for all $x\in \mathrm{Z}_{\mathfrak{N}_\sigma}(G)$ and $y\in G$. It means that $\mathrm{Z}_{\mathfrak{N}_\sigma}(G)\subseteq \mathcal{I}_{\mathfrak{N}_\sigma}(G)$.

  Let $x\in \mathcal{I}_\mathfrak{F}(G)$. Note that  $x=\prod_{p\in\pi(G)}x_p$. From $\langle x_p, y\rangle\leq \langle x,y\rangle\in\mathfrak{N}_\sigma$ it follows that $x_p\in \mathcal{I}_{\mathfrak{N}_\sigma}(G)$ for all $p\in \pi(G)$.

  Let $q\in\pi(G)$. Since $\sigma$ is a partition of $\mathbb{P}$, there exists a unique $\pi_i\in\sigma$ with    $q\in\pi_i$. Let $y$ be a $\pi_i'$-element of $G$. Now $\langle x_q, y\rangle\in\mathfrak{N}_\sigma$. It means that $x_qy=yx_q$. So a $\pi_i$-element $x_q$ permutes with all $\pi_i'$-elements of $G$. Thus $x_q\in \mathrm{Z}_{\mathfrak{N}_\sigma}(G)$ by Corollary \ref{cor15}. Therefore $x\in \mathrm{Z}_{\mathfrak{N}_\sigma}(G)$. So $\mathcal{I}_{\mathfrak{N}_\sigma}(G)\subseteq\mathrm{Z}_{\mathfrak{N}_\sigma}(G)$. Hence $\mathcal{I}_{\mathfrak{N}_\sigma}(G)=\mathrm{Z}_{\mathfrak{N}_\sigma}(G)=\mathrm{Int}_{\mathfrak{N}_\sigma}(G)$.
  Thus $\mathfrak{N}_\sigma$ is regular.

$(2)$ \emph{$\Gamma_{\mathfrak{N}_\sigma}(G)$ is connected and  $\mathrm{diam}(\Gamma_{\mathfrak{N}_\sigma}(G))\leq 3$ for every group $G$.}

If $ |G\setminus\mathcal{I}_{\mathfrak{N}_\sigma}(G)|<2$, then there is nothing to prove. So we may assume that $ |G\setminus\mathcal{I}_{\mathfrak{N}_\sigma}(G)|\geq2$.  Assume that $G$ is a counterexample to $(2)$.  Hence there are elements $ x, y\in G$ such that they are not connected or the lengths of all paths connecting them are greater than 3.

If $x_p\in \mathcal{I}_{\mathfrak{N}_\sigma}(G)$ for all $p\in\pi(G)$, then $x=\prod_{p\in\pi(G)}x_p\in \mathrm{Z}_{\mathfrak{N}_\sigma}(G)=\mathcal{I}_{\mathfrak{N}_\sigma}(G)$, a contradiction. It means that there exist $p, q\in\pi(G)$ with $x_p, y_q\not\in\mathcal{I}_{\mathfrak{N}_\sigma}(G)$.  Hence there exist $\pi_i, \pi_j\in \sigma$, $\pi_i$-element $w$ and $\pi_j$-element $z$ with $ p\not\in\pi_i$, $q\not\in\pi_j$,   $\langle x_p, w\rangle\not\in\mathfrak{N}_\sigma$ and $\langle y_q, z\rangle\not\in\mathfrak{N}_\sigma$.

 If $ \langle w, z\rangle\not\in\mathfrak{N}_\sigma$, then $(x,w,z,y)$ is the path connecting $x$ and $y$ and its length is not greater than 3, a contradiction.
Now $ \langle w, z\rangle\in\mathfrak{N}_\sigma$. Assume that $i\neq j$. So $ wz=zw$ and $\langle zw\rangle=\langle z, w\rangle$. Now $(x,wz,y)$ is the path connecting $x$ and $y$ of length  2, a contradiction.
So $i=j$.
If $\langle x_p, z\rangle\not\in\mathfrak{N}_\sigma$, then $(x,z,y)$ is the path connecting $x$ and $y$ of length   2, a contradiction.  Hence  $\langle x_p, z\rangle\in\mathfrak{N}_\sigma$. Since $ p\not\in \pi_i=\pi_j$, we see that $x_pz=zx_p$ and  $\langle zx_p\rangle=\langle z, x_p\rangle$. Now $(x,w, x_pz,y)$ is the path connecting $x$ and $y$ and its length is not greater than 3, the final contradiction.
\end{proof}

\begin{corollary}[{\cite[Theorem 1.1]{Lucchini2020}}]
$\Gamma_{\mathfrak{N}}(G)$ is connected and  $\mathrm{diam}(\Gamma_{\mathfrak{N}}(G))\leq 3$ for every group $G$.
\end{corollary}

\begin{corollary}[{\cite[Theorem 5.1]{Abdollahi2010}}]
$\Gamma_{\mathfrak{N}}(G)$ is connected and  $\mathrm{diam}(\Gamma_{\mathfrak{N}}(G))\leq 6$ for every group $G$.
\end{corollary}


\begin{thebibliography}{99}
  \small
\parskip=-0mm
\parsep=0mm
\itemsep=0mm

  
\bibitem{Abdollahi2010}
Abdollahi, A., Zarrin, M.: {Non-Nilpotent Graph of a Group}.
\newblock Comm. Algebra \textbf{38}(12), 4390--4403 (2010).
 
\bibitem{Aivazidis2021}
Aivazidis, S., Safonova, I.N., Skiba, A.N.: {Subnormality and residuals for
  saturated formations: A generalization of Schenkman's theorem}.
\newblock J. Group Theory \textbf{24}(4), 807--818 (2021).
 
\bibitem{Baer1953}
{Baer}, R.: {Group elements of prime power index}.
\newblock {Trans. Amer. Math. Soc.} \textbf{75}, 20--47 (1953).
 
\bibitem{Baer1959}
Baer, R.: {Supersoluble Immersion}.
\newblock {Canad J. Math.} \textbf{11}, 353--369 (1959).
 
\bibitem{BES2014}
Ballester-Bolinches, A., Ezquerro, L.M., Skiba, A.N.: On subgroups of
  hypercentral type of finite groups.
\newblock Isr. J. Math. \textbf{199}(1), 259--265 (2014).
 
\bibitem{BKPP}
Ballester-Bolinches, A., Kamornikov, S.F., Pedraza-Aguilera, M.C.,
  P{\'e}rez-Calabuig, V.: On $\sigma$-subnormality criteria in finite
  $\sigma$-soluble groups.
\newblock RACSAM \textbf{114}(2), 94 (2020).
 
\bibitem{bb1}
Ballester-Bolinches, A., P{\'e}rez-Ramos, M.: On a question of {L. A.
  Shemetkov}.
\newblock Comm. Algebra \textbf{27}(11), 5615--5618 (1999).
 
\bibitem{s9}
Ballester-Bollinches, A., Ezquerro, L.M.: {Classes of Finite Groups},
  \emph{Math. Appl.}, vol. 584.
\newblock Springer Netherlands (2006).
 
\bibitem{h3}
Beidleman, J., Heineken, H.: {A note on intersections of maximal
  $\mathcal{F}$-subgroups}.
\newblock J. Algebra \textbf{333}(1), 120--127 (2011).
 
\bibitem{CGZ}
Cao, C., Guo, W., Zhang, C.: On the structure of $\mathfrak {N}_{\sigma
  }$-critical groups.
\newblock Monatsh. Math. \textbf{189}(2), 239--242 (2019).
 
\bibitem{Carter1962}
Carter, R.W.: Nilpotent self-normalizing subgroups and system normalizers.
\newblock Proc. London Math. Soc. \textbf{s3-12}(1), 535--563 (1962).
 
\bibitem{s8}
Doerk, K., Hawkes, T.O.: {Finite Soluble Groups}, \emph{De Gruyter Exp. Math.},
  vol.~4.
\newblock De Gruyter, Berlin, New York (1992).
 
\bibitem{Graddon1971}
Graddon, C.J.: {The Relation Between $\mathfrak{F}$-Reducers and
  $\mathfrak{F}$-Subnormalizers in Finite Soluble Groups}.
\newblock J. London Math. Soc. \textbf{s2-4}(1), 51--61 (1971).
 
\bibitem{Griess1978}
Griess, R.L., Schmid, P.: {The Frattini module}.
\newblock {Arch. Math.} \textbf{30}(1), 256--266 (1978).
 
\bibitem{Guo2015}
Guo, W.: {Structure Theory for Canonical Classes of Finite Groups}.
\newblock Springer-Verlag, Berlin, Heidelberg (2015).
 
\bibitem{h2}
Hall, P.: {On the System Normalizers of a Soluble Group}.
\newblock Proc. London Math. Soc. \textbf{s2-43}(1), 507--528 (1938).
 
\bibitem{hu2019}
Hu, B., Huang, J., Skiba, A.N.: {Characterizations of Finite $\sigma$-Nilpotent
  and $\sigma$-Quasinilpotent Groups}.
\newblock Bull. Malays. Math. Sci. Soc. \textbf{42}(5), 2091--2104 (2019).
 
\bibitem{Huppert1969}
Huppert, B.: {Zur Theorie der Formationen}.
\newblock {Arch. Math.} \textbf{19}(6), 561--574 (1969).
  
\bibitem{Kazarin2011}
Kazarin, L.S., Mart{\'i}nez-Pastor, A., P{\'e}rez-Ramos, M.D.: {On the Sylow
  graph of a group and Sylow normalizers}.
\newblock Israel J. Math. \textbf{186}(1), 251--271 (2011).
 
\bibitem{Kegel1978}
Kegel, O.H.: {Untergruppenverb\"{a}nde endlicher Gruppen, die den
  Subnormalteilerverband echt enthalten}.
\newblock Arch. Math. \textbf{30}(1), 225--228 (1978).
 
\bibitem{Lucchini2020}
Lucchini, A., Nemmi, D.: The diameter of the non-nilpotent graph of a finite
  group.
\newblock Trans. Comb. \textbf{9}(2), 111--114 (2020).
 
\bibitem{h9}
Mann, A.: System normalizers and subnormalizers.
\newblock Proc. London Math. Soc. \textbf{20}(1), 123--143 (1970).
 
\bibitem{Murashka2013}
Murashka, V.I.: {One one generalization of Baer's theorems about hypercenter
  and nilpotent residual}.
\newblock PFMT (16), 84--88 (2013)

\bibitem{vF}
Murashka, V.I.: Classes of finite groups with generalized subnormal cyclic
  primary subgroups.
\newblock Sib. Math. J. \textbf{55}(6), 1105--1115 (2014).
 
\bibitem{Murashka2018}
Murashka, V.I.: Finite groups with given sets of $\mathfrak{F}$-subnormal
  subgroups.
\newblock AEJM \textbf{13}(04), 2050073 (2018).
 
\bibitem{Nemmi2020}
Nemmi, D.: Graphs encoding properties of finite groups.
\newblock Master's thesis, Universit$\grave{\text{a}}$ degli Studi di Padova
  (2020).
\newblock Supervisor: Lucchini, A.

\bibitem{Schmid2014}
Schmid, P.: The hypercenter of a profinite group.
\newblock Beitr. Algebra Geom. \textbf{55}(2), 645--648 (2014).
 
\bibitem{Shemetkov1974}
Shemetkov, L.A.: Graduated formations of groups.
\newblock Mathematics of the {USSR}-Sbornik \textbf{23}(4), 593--611 (1974).
 
\bibitem{Shemetkov1976}
Shemetkov, L.A.: Factorizaton of nonsimple finite groups.
\newblock Algebra Logika \textbf{15}(6), 684--715 (1976).
 
\bibitem{SHEM}
Shemetkov, L.A.: Frattini extensions of finite groups and formations.
\newblock Comm. Algebra \textbf{25}(3), 955--964 (1997).
 
\bibitem{s6}
Shemetkov, L.A., Skiba, A.N.: Formations of algebraic systems.
\newblock Nauka, Moscow (1989).
\newblock In Russian

\bibitem{h4}
Skiba, A.N.: {On the $\mathfrak{F}$-hypercentre and the intersection of all
  $\mathfrak{F}$-maximal subgroups of a finite group}.
\newblock J. Pure Appl. Algebra \textbf{216}(4), 789--799 (2012).
 
\bibitem{sp4}
Skiba, A.N.: On $\sigma$-subnormal and $\sigma$-permutable subgroups of finite
  groups.
\newblock J. Algebra \textbf{436}, 1--16 (2015).
 
\bibitem{wF}
Vasil'ev, A.F., Vasil'eva, T.I., Vegera, A.S.: {Finite groups with generalized
  subnormal embedding of Sylow subgroups}.
\newblock Sib. Math. J. \textbf{57}(2), 200--212 (2016).
 
\bibitem{VM}
Vasilyev, A.F., Murashka, V.I.: {Arithmetic Graphs and Classes of Finite
  Groups}.
\newblock Sib. Math. J. \textbf{60}(1), 41--55 (2019).
 
\bibitem{Yi2015}
Yi, X., Kamornikov, S.F.: Subgroup-closed lattice formations.
\newblock J. Algebra \textbf{444}, 143--151 (2015).
 
 \end{thebibliography}
\end{document}